\documentclass[12pt,twoside,final,psamsfonts]{amsart}
\usepackage[psamsfonts]{amssymb}
\usepackage[dvips,final]{graphicx,psfrag}
\usepackage{times,a4wide}
\usepackage{esint}

\theoremstyle{plain}
\newtheorem*{theorem*}{Theorem}
\newtheorem*{theoremA}{Theorem A}
\newtheorem*{theoremB}{Theorem B}
\newtheorem*{theoremC}{Theorem C}
\newtheorem{theorem}{Theorem}[section]
\newtheorem{proposition}[theorem]{Proposition}
\newtheorem*{proposition*}{Proposition}

\newtheorem*{corollary*}{Corollary}
\newtheorem{lemma}[theorem]{Lemma}
\newtheorem*{lemma*}{Lemma}

\theoremstyle{remark}
\theoremstyle{definition}

\newtheorem*{remark*}{Remark}

\theoremstyle{definition}

\newtheorem*{definition*}{Definition}

\begin{document}

\newcommand{\lil}{\lambda\in \Lambda}
\newcommand{\D}{\mathbb{D}}
\newcommand{\C}{\mathbb{C}}
\newcommand{\N}{\mathcal{N}}
\newcommand{\R}{\mathbb{R}}
\newcommand{\Z}{\mathbb{Z}}
\newcommand{\dist}{\operatorname{dist}}
\newcommand{\Int}{\operatorname{Int}}
\newcommand{\Hol}{\operatorname{Hol}}
\newcommand{\Har}{\operatorname{Har}}
\newcommand{\Harmd}{\Har_+(\D)}
\renewcommand{\Re}{\mbox{Re}}
\renewcommand{\Im}{\mbox{Im}}

\renewcommand{\qedsymbol}{$\blacksquare$}

\parskip 8 pt

\title{The Corona Property in Nevanlinna quotient algebras and Interpolating sequences}

\author{Xavier Massaneda, Artur Nicolau, Pascal J. Thomas }

\address{A. Nicolau: Universitat Aut\`onoma de Barcelona\\
Departament de Matem\`a\-tiques\\
Edifici C, 08193-Bellaterra\\ Catalonia}
\email{artur@mat.uab.cat}

\address{X. Massaneda: Universitat  de Barcelona\\
Departament de Matem\`a\-tiques i Inform\`atica\\
Gran Via 585, 08007-Bar\-ce\-lo\-na\\ Catalonia}
\email{xavier.massaneda@ub.edu}

\address{P. Thomas:  
Institut de Mathématiques de Toulouse ; UMR5219 \\
Universit\'e de Toulouse ; CNRS \\
UPS IMT, F-31062 Toulouse Cedex 9 \\
 \\ France}
 \email{pascal.thomas@math.univ-toulouse.fr}

\subjclass[2000]{30H15,30H80,30J10}

\thanks{First and second authors supported by the Generalitat de Catalunya (grants 2017 SGR 359 and 2017 SGR 395) and the Spanish Ministerio de Ciencia e Innovaci\'on (projects MTM2014-51834-P, MTM2017-83499-P and  MTM2014-51824-P, MTM2017-85666-P.)}

\date{\today}


\begin{abstract} 
Let $I$ be an inner function in the unit disk $\D$ and let $\N$ denote the Nevanlinna class. We prove that under natural assumptions, Bezout equations in the quotient algebra $\N/I\N$ can be solved if and only if the zeros of $I$ form  a finite union of Nevanlinna interpolating sequences. This is in contrast with the situation in the algebra of bounded analytic functions, where being a
finite union of  interpolating sequences is a sufficient but not necessary condition. An analogous result in the Smirnov class is proved as well as several equivalent descriptions of Blaschke products whose zeros form a finite union of interpolating sequences in the Nevanlinna class. 
\end{abstract}

\maketitle

An analytic function $f$ in the unit disk $\mathbb{D}$ is in the Nevanlinna class $\N$ if the function $\log^+|f| $ has a harmonic majorant in $\mathbb{D}$. The Nevanlinna class is
an algebra where any nonvanishing function is invertible, and the largest function space where the classical inner-outer factorization holds. In particular, if $f\in \N$, then $f$ factors as $f=BF $, where $B$ is a Blaschke product and $F=E S_1/S_2$, 
where $E, S_1, S_2$ are nonvanishing analytic functions, $E$
is outer and $S_1, S_2$ are singular inner. 


Carleson's classical Corona Theorem \cite{Ca} for the algebra $H^\infty$ of bounded analytic functions in the unit disk  states that given $f_1,\ldots, f_n\in H^\infty$ there exist $g_1,\ldots, g_n\in H^\infty$ such that the B\'ezout equation $f_1g_1+\cdots+f_ng_n\equiv1$ holds if and only if 
there exists $\delta>0$ such that for all $z\in \D$,
\begin{equation}
\label{HCor}
|f_1(z)|+\cdots+|f_n(z)|\geq \delta.
\end{equation}
\noindent The Corona problem for the  Nevanlinna class was solved by R. Mortini in \cite[Satz 4]{M} (see also \cite{Ma}).
Let $\Harmd$ denote the cone of positive functions in the unit disk.  Given $f_1,\ldots,f_n\in \N$, the B\'ezout equation $f_1g_1+\cdots+f_ng_n\equiv1$ can be solved with functions $g_1,\ldots,g_n\in \N$ if and only if there exists $H\in\Harmd$ such that
\begin{equation}
\label{NevCor}
|f_1(z)|+\cdots+|f_n(z)|\geq e^{-H(z)},\hspace{1cm} z\in\mathbb{D}.
\end{equation}

Notice that \eqref{NevCor} can be seen as a modification of \eqref{HCor}, where the constant bound from below is replaced by a control given
by a positive harmonic majorant (or minorant). 
It turns out that we can use this as a general guiding principle:  
the known results for $H^\infty$ are valid as well for $\N$, once we replace uniform bounds by positive harmonic functions, in a proper way. We shall recall how this is applied to interpolating sequences, but first 
let us turn to the Corona results for quotient algebras of $H^\infty$.

A function $I\in H^\infty$ is called inner if $\lim_{r\to1} |I(r\xi)|=1$ for almost every $\xi\in\partial\mathbb{D}$. Any principal ideal of $H^\infty$ is of the form $I H^\infty$.
P. Gorkin, R. Mortini and N. Nikolskii  \cite{GMN} considered the quotient algebra $H^\infty /IH^\infty$ and its \emph{visible spectrum,} which consists of $\Lambda=Z(I)$, the zeros of $I$.  We say  that $H^\infty /IH^\infty$ has the Corona Property if: 
given $f_1,\ldots, f_n\in H^\infty$ such that 
\begin{equation*}
 |f_1(\lambda)|+\cdots+|f_n(\lambda)|\geq \delta \qquad \lambda\in\Lambda,
\end{equation*}
for some $\delta>0$, 
then there exist $g_1,\ldots, g_n, h\in H^\infty$ such that $f_1g_1+\cdots+f_ng_n=1 +h I$.

Among many other things, it is proved in  \cite{GMN} that $H^\infty /IH^\infty$ has the Corona Property
 if and only if $I$ satisfies the \emph{weak embedding property} (WEP): 
\begin{equation}
\label{wepcond}
\mbox{For any }\epsilon>0,\mbox{ there exists }\eta>0\mbox{ such that }|I(z)|\geq \eta 
\mbox{ for }  z\mbox{ with }\rho(z,\Lambda)>\epsilon.
\end{equation}

In the framework of the algebra $\N$, since any nonvanishing function is invertible, it is enough to consider
a Blaschke product $B$ with zero set $\Lambda$ and the quotient algebra $\N_B = \N / B \N$, 
the elements of which are in one-to-one correspondence with their  traces over the zeros of $B$. 

\begin{definition*}
We say that the Corona Property holds for $\N_B$ if for any positive integer $n$ and any  $f_1,\ldots,f_n\in \N$ for which there exists $H\in \Harmd$ such that 
\begin{equation}\label{1}
|f_1(\lambda)|+\cdots+|f_n(\lambda)|\geq e^{-H(\lambda)}\qquad \lambda\in\Lambda,
\end{equation}
there exist $g_1,\ldots,g_n , h \in \N$ such that $f_1g_1+\cdots+f_ng_n  = 1 +  B h $, that is, there exist $g_1,\ldots,g_n \in \N$ such that 
\[
 f_1(\lambda)g_1(\lambda)+\cdots+f_n(\lambda)g_n(\lambda)=1, \quad \lambda\in\Lambda. 
\]
\end{definition*}

Observe that condition \eqref{1} is necessary and that the case $n=1$ 
simply expresses invertibility in $\N_B$. 

Examples of inner functions satisfying the WEP \eqref{wepcond} are provided by interpolating
Blaschke products, that is to say, Blaschke products associated to an
interpolating sequence. Recall that a sequence $\Lambda=\{\lambda_k\}_k$ of points in $\mathbb{D}$ is called interpolating (for $H^\infty$) if for any bounded sequence $\{w_k\}_k$ of complex numbers there exists $f\in H^\infty$ with $f(\lambda_k)=w_k$, $k\geq 1$.  When $B$ denotes the Blaschke product associated to a Blaschke sequence $\Lambda$ we write, for $\lambda\in\Lambda$,
\[
 b_\lambda(z)=\frac{z-\lambda}{1-\bar\lambda z}\quad\textrm{and}\quad B_\lambda(z)=\frac{B(z)}{b_\lambda(z)}\ .
\]

In \cite{Ca2}, L. Carleson proved that $\Lambda$ is interpolating if and only if
\begin{equation}
\label{condCarl}
\inf_{k\geq 1} |B_{\lambda_k}(\lambda_k)|=(1-|\lambda_k|^2)|B'(\lambda_k)|=\prod_{m\neq k}\left|\frac{\lambda_m-\lambda_k}{1-\bar\lambda_k\lambda_m}\right|>0.
\end{equation}

In the $H^\infty$ case, there currently is no known  characterization of
the inner functions satisfying the weak embedding property \eqref{wepcond}. 
However, easy examples are provided by finite unions of interpolating sequences (for $H^\infty$),
sometimes known as Carleson-Newman sequences.  On the other hand in \cite{GMN} there is an example, constructed in collaboration with S. Treil and V. Vasyunin, of a WEP Blaschke product which is not a finite product of interpolating Blaschke products. The papers \cite{Bo1}, \cite{BNT} and \cite{NV}  contain further contributions in this direction.


We now turn to the Nevanlinna analogue of ($H^\infty$)-interpolating sequences. Given a sequence $\Lambda=\{\lambda_k\}_k$ in $\mathbb{D}$, let $W(\Lambda)$ be the set of sequences $\{w_k\}_k$ of complex numbers such that the map $\lambda_k\longmapsto \log^+|w_k|$, $k\geq 1$, has a positive harmonic majorant in $\mathbb{D}$. Observe that if $f\in \N$, then the sequence $\{f(\lambda_k)\}_k$ is in $W(\Lambda)$. 

\begin{definition*}
A sequence of points $\Lambda=\{\lambda_k\}_k\subset\mathbb{D}$ is called \emph{interpolating for} $\N$ if for any $\{w_k\}_k\in W(\Lambda)$ there exists $f\in \N$ such that $f(\lambda_k)=w_k$, $k\geq 1$.  
\end{definition*}

The main theorem in \cite{HMNT} states that
 $\Lambda=\{\lambda_k\}_{k\geq 1}$ is an interpolating sequence for $\N$ if and only if there exists $H\in \Harmd$ such that
\begin{equation}\label{intN}
\left|B_{\lambda_k}(\lambda_k)\right|=(1-|\lambda_k|^2)|B'(\lambda_k)|\geq e^{-H(\lambda_k)},\hspace{1cm} k\geq 1.
\end{equation}
In keeping with our general principle, this is a harmonic majorant version of 
\eqref{condCarl}.

A Blaschke product with zeros $\Lambda$ is called a \emph{Nevanlinna Interpolating Blaschke Product} (NIBP) if $\Lambda$ is an interpolating sequence for $\N$.  Other properties and characterizations of Nevanlinna interpolating sequences have been given recently in \cite{HMN1}.
Consider the pseudohyperbolic distance in $\D$, defined as
\[
 \rho(z,w)=\left|b_w(z)\right| =\left|\frac{z-w}{1-\bar z w}\right|\ ,
\]
and the corresponding pseudohyperbolic disks $D(z,r)=\{w\in\D : \rho(z,w)<r\}$. Given a sequence $\Lambda$ and a point $z\in \D$ let $\rho(z,\Lambda)=\inf_{\lambda\in\Lambda} \rho(z,\lambda)$. According to \cite[Theorem 1.2]{HMN1} a Blaschke product $B$ with simple zeros $\Lambda$ is a NIBP if and only if there exists $H\in \Harmd$ such that
\begin{equation}\label{intN2}
\left|B(z)\right|\geq e^{-H(z)}\rho(z,\Lambda),\hspace{1cm} z\in\mathbb{D}.
\end{equation}

It was proved by Vasyunin in \cite{Vas78} (see also \cite{K-L}) that $B$ is an interpolating Blaschke product (for $H^\infty$) if and only if there exists a constant $\delta>0$ such that
\begin{equation*}
|B(z)|\geq\delta\rho(z,\Lambda),\hspace{1cm}z\in\mathbb{D}.
\end{equation*}
Again in the framework of our general principle, condition \eqref{intN2} can be seen as the harmonic majorant version of Vasyunin's result.  In contrast with the situation in $H^\infty$, our main result states that the Corona Property holds for $\N_B$ if and only if $B$ is a finite product of NIBP's. In other words, in the context of $\N$, there are no non-trivial WEP inner functions. 

\begin{theoremA}\label{main}
Let $B$ be a Blaschke product and let $\Lambda$ be its zero sequence. The following conditions are equivalent:
\begin{itemize}
\item [(a)] The Corona Property holds for $\N_B$.
\item [(b)] For any $H_1\in \Harmd$, there exists $H_2\in \Harmd$ such that $|B(z)|\geq e^{-H_2(z)}$ for any $z\in\mathbb{D}$ such that $\rho(z,\Lambda)\geq e^{-H_1(z)}$.
\item [(c)] $B$ is a finite product of Nevanlinna interpolating Blaschke products.
\end{itemize}
\end{theoremA}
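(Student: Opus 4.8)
The plan is to prove the cyclic chain of implications (c) $\Rightarrow$ (b) $\Rightarrow$ (a) $\Rightarrow$ (c), since (b) is essentially the Nevanlinna (harmonic-majorant) form of the WEP and serves as a convenient bridge between the analytic-geometric condition (c) and the functional condition (a).

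\smallskip
\textbf{Step 1: (c) $\Rightarrow$ (b).} Write $B=B_1\cdots B_N$ with each $B_j$ a NIBP, with zero sets $\Lambda_j$, so $\Lambda=\bigcup_j\Lambda_j$. By \eqref{intN2} there are $H^{(j)}\in\Harmd$ with $|B_j(z)|\ge e^{-H^{(j)}(z)}\rho(z,\Lambda_j)$. Given $H_1\in\Harmd$, and $z$ with $\rho(z,\Lambda)\ge e^{-H_1(z)}$, the key estimate is to bound $\rho(z,\Lambda_j)$ from below for each $j$: since $\rho(z,\Lambda_j)\ge\rho(z,\Lambda)\ge e^{-H_1(z)}$, we immediately get $|B_j(z)|\ge e^{-H^{(j)}(z)}e^{-H_1(z)}$, and multiplying, $|B(z)|\ge e^{-H_2(z)}$ with $H_2=\sum_j H^{(j)}+N H_1\in\Harmd$. (This direction is the easiest.)

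\smallskip
\textbf{Step 2: (b) $\Rightarrow$ (a).} This is the ``Corona from WEP'' implication. Suppose $f_1,\dots,f_n\in\N$ satisfy \eqref{1} for some $H\in\Harmd$. The strategy is to run Mortini's $\bar\partial$-based solution of the Nevanlinna corona problem \eqref{NevCor}, but only needing the lower bound $\sum|f_i|\ge e^{-H'}$ on all of $\D$, which we must manufacture from the bound on $\Lambda$ together with (b). The idea: let $F=\sum_i|f_i|$; on the set where $\rho(z,\Lambda)$ is ``not too small'' relative to a harmonic function we use (b) to say $|B|$ is bounded below, hence $B$ is locally invertible there and we can correct; near $\Lambda$ we use that $f_i\in\N$ are bounded by $e^{G}$ for some $G\in\Harmd$, so on a pseudohyperbolic disk $D(\lambda,e^{-H_1(\lambda)})$ the value $\sum|f_i|$ stays comparable to $\sum|f_i(\lambda)|\ge e^{-H(\lambda)}$ by a Harnack/Schwarz-type oscillation estimate. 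Choosing $H_1$ appropriately and invoking (b) with that $H_1$ yields a global minorant $e^{-H'}$ for $\sum|f_i|+|B|$ on $\D$; then Mortini's theorem (equivalently \eqref{NevCor} applied to the system $f_1,\dots,f_n,B$) solves $\sum f_ig_i+Bh=1$ in $\N$, which is exactly the Corona Property for $\N_B$.

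\smallskip
\textbf{Step 3: (a) $\Rightarrow$ (c).} This is the main obstacle and the heart of the theorem. The plan is to use the Corona Property to produce the interpolation data needed to split $\Lambda$ into finitely many Nevanlinna interpolating sequences, via the characterization \eqref{intN}. One natural route: apply the Corona Property to cleverly chosen pairs or tuples $f_1,\dots,f_n$ (for instance built from partial Blaschke products on a partition of $\Lambda$, or from functions separating $\Lambda$ into ``well-separated'' pieces) to deduce a lower bound of the form $(1-|\lambda|^2)|B'(\lambda)|\ge e^{-H(\lambda)}$ after removing boundedly many points near each $\lambda$; more precisely, one shows that the ``bad set'' where the Blaschke condition \eqref{intN} fails can be covered by a controlled number of subsequences each satisfying \eqref{intN}. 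Quantitatively, the Corona solvability gives, for any splitting $\Lambda=\Lambda'\cup\Lambda''$, control of $\inf_{\Lambda'}|B_{\Lambda''}|$ in terms of a harmonic majorant, and an iteration/stopping-time argument on pseudohyperbolic scales shows the number of pieces is finite (this is where one must rule out ``infinitely nested'' clusters, using that $\N$-corona data only forbids exponential-in-$H$ degeneracy, not arbitrary degeneracy). I expect the technical core to be a careful stopping-time decomposition of $\Lambda$ governed by a single positive harmonic function, combined with the separation estimates from \cite{HMNT} and \cite{HMN1}, showing each resulting cluster is a NIBP and that finitely many clusters suffice; the delicate point is extracting a \emph{uniform} (finite) bound on the number of clusters from the a priori \emph{non-uniform} (harmonic-majorant) corona estimate.
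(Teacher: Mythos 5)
Your Steps 1 and 2 are essentially correct and follow the same lines as the paper. Step 1 is the paper's (a)$\Rightarrow$(b) of Theorem C (use \eqref{intN2} and the trivial fact $\rho(z,\Lambda_j)\geq\rho(z,\Lambda)$). Step 2 is the paper's (b)$\Rightarrow$(a) of Theorem A: one uses a Schwarz-lemma oscillation estimate (the paper's Lemma \ref{schwarz}) near $\Lambda$, and the WEP condition off $\Lambda$, to obtain $\sum_j|f_j(z)|+|B(z)|\geq e^{-H_3(z)}$ on all of $\D$, then invokes Mortini's theorem. This matches.

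Step 3, however, is where the substance of the theorem lies, and what you have written is a description of the shape of the problem rather than a proof. You correctly identify the difficulty --- ``extracting a uniform (finite) bound on the number of clusters from a non-uniform (harmonic-majorant) estimate'' --- but you do not supply the mechanism that overcomes it. In the paper this is exactly the role of Theorem B: given data $M_k$ with no harmonic majorant (and $m_k\to\infty$, $(1-|\lambda_k|)M_k$ bounded), one can pass to a subsequence and produce a \emph{single} $H\in\Harmd$ pinched between $M_{k_i}/m_{k_i}$ and $M_{k_i}$ while $M_{k_i}$ still has no majorant. The proof of (b)$\Rightarrow$(c) then sets $M_k=M_{m_k}(\lambda_k)=-\log\rho_{m_k}(\lambda_k)$, invokes a geometric lemma (Lemma \ref{far}) that in the Nevanlinna setting every pseudohyperbolic neighbourhood of a Blaschke sequence has a point far from $\Lambda$ at a suitable harmonic scale, and derives a contradiction with the WEP bound. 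Without Theorem B (or an equivalent device), the ``stopping-time decomposition'' you gesture at has no hook: the Corona Property, by itself, only gives harmonic-majorant control, and one must convert the failure of finite clustering into the failure of \emph{some} harmonic majorant, then transfer that failure to a single $H$-scale; this is precisely what Theorem B does.

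A secondary structural issue: your cyclic chain proposes a direct (a)$\Rightarrow$(c). But to run the cluster-elimination argument one needs WEP-type control ``$|B|$ is not too small where $\rho(\cdot,\Lambda)$ is not too small,'' and extracting that from (a) requires constructing specific corona data from a Whitney decomposition of the set near $\Lambda$ --- which is exactly the paper's proof of (a)$\Rightarrow$(b). So in effect you would be reproving (a)$\Rightarrow$(b) inside your Step 3 anyway; the paper's decomposition (a)$\Leftrightarrow$(b), (b)$\Rightarrow$(c), (c)$\Rightarrow$(b) is the cleaner way to organize the same content. You should also note Lemma \ref{subproduct} (any subproduct of a Nevanlinna WEP product is Nevanlinna WEP), which the paper needs to conclude that once $\Lambda$ is split into $H$-separated pieces, each piece is a NIBP; this too is absent from your sketch.
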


By analogy with \eqref{wepcond} in the $H^\infty$-case, a Blaschke	product $B$ satisfying (b) will be called \emph{Nevanlinna WEP}. 
It can be seen from our main result, but will be proved beforehand as Lemma \ref{subproduct}
that any subproduct of a Nevanlinna WEP Blaschke product is also Nevanlinna WEP while in the setting of $H^\infty$ subproducts of WEP Blaschke products may fail to be WEP. 

The equivalence between (a) and (b) is proved along the same lines as in the case of $H^\infty$ (see Section 3). 
The main difficulty is to show that (b) implies (c) which is the implication that does not hold in the context of $H^\infty$.  The following auxiliary result about harmonic majorants may be of independent interest:

\begin{theoremB}\label{critical-growth}
 Let $\{\lambda_k\}_k\subset\D$ and let $m_k,M_k>0$, $k\geq 1$ be such that
 \begin{itemize}
  \item [i)] $\lim\limits_{k\to\infty} m_k=+\infty$,
  \item [ii)] $\limsup\limits_{k\to\infty}\, (1-|\lambda_k|) M_k<+\infty$.
 \end{itemize}

 Assume that the map $\lambda_k \longmapsto M_k$, $k\geq 1$ has no harmonic majorant, i.e., there is no $H\in\Harmd$ such that $M_k\leq H(\lambda_k)$, $k\geq 1$. 
 Then there exist a subsequence of positive integers $\{k_i\}_i$ and $H\in\Harmd$ such that
 \begin{itemize}
  \item [(a)] $\dfrac{M_{k_i}}{m_{k_i}}\leq H(\lambda_{k_i})\leq M_{k_i}$, $i\geq 1$, 
  \item [(b)] The map $\lambda_{k_i} \longmapsto M_{k_i}$, $i\geq 1$, has no harmonic majorant.
 \end{itemize}
 Consequently, the map $\lambda_{k_i} \longmapsto m_{k_i} H(\lambda_{k_i})$, $i\geq 1$ has no harmonic majorant either.
\end{theoremB}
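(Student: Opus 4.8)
\emph{Reduction to a single subsequence.} The plan is to show that it suffices to produce a subsequence $\{k_i\}_i$ with $\sum_i m_{k_i}^{-1}<\infty$ along which $\lambda_{k_i}\mapsto M_{k_i}$ still has no harmonic majorant; the function $H$ will then come essentially for free. Indeed, granting such a subsequence, write ii) as $(1-|\lambda_k|)M_k\le C$ for all $k$, set $\zeta_i:=\lambda_{k_i}/|\lambda_{k_i}|$ (any point of $\partial\D$ if $\lambda_{k_i}=0$), and put
\[
H(z):=\sum_i \frac{(1-|\lambda_{k_i}|)M_{k_i}}{m_{k_i}}\,\frac{1-|z|^2}{|1-\overline{\zeta_i}\,z|^2}.
\]
The coefficients sum to at most $C\sum_i m_{k_i}^{-1}<\infty$, so $H\in\Harmd$. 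Since $\overline{\zeta_i}\,\lambda_{k_i}=|\lambda_{k_i}|$, the $i$-th term at $z=\lambda_{k_i}$ equals $\frac{(1-|\lambda_{k_i}|)M_{k_i}}{m_{k_i}}\cdot\frac{1-|\lambda_{k_i}|^2}{(1-|\lambda_{k_i}|)^2}\ge M_{k_i}/m_{k_i}$, hence $H(\lambda_{k_i})\ge M_{k_i}/m_{k_i}$. Finally pass to the sub-subsequence $\{i:\ H(\lambda_{k_i})\le M_{k_i}\}$: it still has no harmonic majorant, because on the discarded indices $M_{k_i}<H(\lambda_{k_i})$ and the sum of two functions of $\Harmd$ lies in $\Harmd$. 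Along this last subsequence (a) and (b) hold, and since $m_{k_i}H(\lambda_{k_i})\ge M_{k_i}$ the final assertion follows as well.

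\emph{Set-up for the extraction.} Two elementary consequences of i) and ii): first, $M_k$ is bounded on $\{|\lambda_k|\le 1-\delta\}$, which therefore carries a constant harmonic majorant, and $\{m_k<N\}$ is finite, so for all $\delta,N>0$ the set $\{k:\ 1-|\lambda_k|<\delta,\ m_k\ge N\}$ still has no harmonic majorant; second, $\mu(\delta):=\min\{m_k:\ 1-|\lambda_k|<\delta\}$ is well defined and $\mu(\delta)\to\infty$ as $\delta\to0$ (otherwise $m_k\not\to\infty$). Next recall the description of sequences admitting a positive harmonic majorant: $\lambda_k\mapsto v_k$ ($k\in S$) has one precisely when $m(S):=\inf\{\|\nu\|:\ \nu\ge0,\ \int \tfrac{1-|\lambda_k|^2}{|1-\overline{\lambda_k}\zeta|^2}\,d\nu(\zeta)\ge v_k\ \forall k\in S\}$ is finite, and a weak-$\ast$ compactness argument gives $m(S)=\sup\{m(F):\ F\subset S\ \text{finite}\}$. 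Thus ``no harmonic majorant'' means $m(S)=\infty$, i.e. there are finite subsets of arbitrarily large $m$.

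\emph{The extraction.} The crucial point is a quantitative refinement of the last statement: there is a function $j\mapsto B(j)$ so that for every $j$ and every $\delta>0$ one can find a finite $F\subset\{k:\ 1-|\lambda_k|<\delta\}$ with $\#F\le B(j)$ and $m(F)>j$. Granting this, build pairwise disjoint finite blocks $F_1,F_2,\dots$ inductively: having chosen $F_1,\dots,F_{j-1}$, pick $\delta_j$ so small that $\mu(\delta_j)\ge 2^{\,j}B(j)$, and then (using that $\{1-|\lambda_k|<\delta_j\}$ minus the finitely many already-used indices has $m=\infty$) a finite $F_j$ inside it, disjoint from the earlier blocks, with $\#F_j\le B(j)$ and $m(F_j)>j$. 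Let $\{k_i\}_i:=\bigsqcup_j F_j$. For $k\in F_j$ we have $m_k\ge\mu(\delta_j)\ge 2^{\,j}\#F_j$, whence $\sum_i m_{k_i}^{-1}\le\sum_j \#F_j/(2^{\,j}\#F_j)=\sum_j 2^{-j}<\infty$; and $m(\{k_i\})\ge m(F_j)>j$ for every $j$, so $\lambda_{k_i}\mapsto M_{k_i}$ has no harmonic majorant. This is the subsequence needed in the first paragraph.

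\emph{Where the difficulty lies.} Everything above is routine except the boxed combinatorial claim in the last paragraph: that an obstruction to the existence of a harmonic majorant, of ``mass'' exceeding $j$, can always be localized to $O_j(1)$ points lying arbitrarily deep in $\D$. This is where one has to analyse the extremal structure of configurations with no positive harmonic majorant — morally, that such an obstruction is carried by a bounded number (controlled by $j$) of pseudohyperbolically separated points of roughly maximal weight $(1-|\lambda_k|)M_k\asymp C$, spread along Whitney arcs — and, in the portions of the sequence where $M_k$ is much smaller than $1/(1-|\lambda_k|)$, one must first replace $M_k$ by a comparable maximal‑weight configuration before the counting argument applies. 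I expect this to be the main obstacle; the two reductions in the first two paragraphs are short.
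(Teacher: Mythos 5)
There is a genuine gap, and the strategy as stated cannot work. Your first paragraph reduces the theorem to producing a subsequence $\{k_i\}$ with $\sum_i m_{k_i}^{-1}<\infty$ along which $\lambda_{k_i}\mapsto M_{k_i}$ still has no harmonic majorant; the boxed combinatorial claim in the third paragraph is supposed to furnish it. But such a subsequence need not exist. For any $\{\lambda_k\}$ and targets $v_k>0$, the condition $\sum_k (1-|\lambda_k|)v_k<\infty$ is already sufficient for a positive harmonic majorant (sum Poisson kernels at the radial projections, exactly as in your construction of $H$). Now take $(1-|\lambda_k|)M_k=1/k$ and $m_k=k$, with $\{\lambda_k\}$ a thin separated sequence (one point per dyadic level, angularly spread) for which $\lambda_k\mapsto M_k$ has no majorant. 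Hypotheses i) and ii) hold, but for \emph{every} subsequence $\sum_i m_{k_i}^{-1}=\sum_i k_i^{-1}=\sum_i(1-|\lambda_{k_i}|)M_{k_i}$, so your required summability automatically forces the subsequence to acquire a majorant: no admissible extraction exists. The same example shows that the boxed claim is false: $m(\{k\})$ is comparable to $(1-|\lambda_k|)M_k\to 0$, so by subadditivity of $m$ any $F\subset\{k:1-|\lambda_k|<\delta\}$ with $m(F)>j$ must have $\#F\to\infty$ as $\delta\to 0$, and no function $B(j)$ exists.

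The paper's proof avoids this trap entirely by never asking for $\sum m_{k_i}^{-1}$, nor even $\sum(1-|\lambda_{k_i}|)M_{k_i}/m_{k_i}$, to be finite. It cuts the indices into consecutive blocks $[k_j,k_j']$, chosen so that $m_k>3^j$ on the $j$-th block, and takes $H_j$ to be the solution of the \emph{extremal} minimal-mass problem of majorizing $M_k/m_k$ on that block, with $H=\sum_j H_j$. The blocks are terminated exactly when the corresponding extremal mass for the unreduced targets $M_k$ first reaches $2^j$ (which guarantees that the blocks together retain the whole obstruction), and the Claim in the paper's proof, which uses only hypothesis ii), caps that extremal mass by $2^{j+1}$; dividing by $m_k>3^j$ then gives $H_j(0)\le 2^{j+1}/3^j$, summable regardless of how many indices lie in a block. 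The decisive mechanism you are missing is this two-sided control of block extremal values: the true extremal majorant of $M_k/m_k$ exploits overlap between Poisson constraints and can be far smaller than the diagonal sum of individual Poisson terms that your explicit $H$ uses, and it is precisely that gain, rather than a sparse extraction, that makes the construction converge.
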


Our last result collects several different descriptions of finite products of NIBP's. Analogous results for interpolating Blaschke products were proved by Kerr-Lawson \cite{K-L}, Gorkin \& Mortini \cite{GM} and Borichev \cite{Bo1}.

Given $H\in \Harmd$ and $z\in\mathbb{D}$, denote $D_H(z)=D(z,e^{-H(z)}) $. Given a Blaschke product $B$ and $z \in \D$, let $|B(N)(z)|$ denote the value at the point $z\in\mathbb{D}$ of the modulus of the Blaschke product obtained from $B$ after deleting the $N$ zeros of $B$ which are closest (in the pseudo-hyperbolic metric) to $z$.

\begin{theoremC}\label{descriptions}
Let $B$ be a Blaschke product with zeros $\Lambda$ and let $N$ be a positive integer. The following conditions are equivalent:
\begin{itemize}
\item  [(a)] $B$ is a product of $N$ Nevanlinna interpolating Blaschke products.
\item  [(b)]  There exists $H_1\in \Harmd$ such that 
\begin{equation*}
|B(z)|\geq e^{-H_1(z)}\rho^N\left(z,\Lambda\right),\hspace{1cm}z\in\mathbb{D}.
\end{equation*}
\item  [(c)]  
There exists $H_2\in \Harmd$ such that $|B(N)(z)|\geq e^{-H_2(z)}$, $z\in\mathbb{D}$.

\item [(d)] There exists $H_3\in \Harmd$ such that
\begin{equation*}
D_N(B)(z)=\sum_{j=0}^{N}(1-|z|)^j|B^{(j)}(z)|\geq e^{-H_3(z)},\hspace{1cm}z\in\mathbb{D}.
\end{equation*}

\end{itemize}
\end{theoremC}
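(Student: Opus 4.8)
The plan is to close the cycle by proving $(a)\Rightarrow(c)$, $(c)\Rightarrow(b)$, $(c)\Rightarrow(d)$, $(b)\Rightarrow(a)$ and $(d)\Rightarrow(a)$, which together yield all the equivalences. The first two are elementary and I would dispatch them first. For $(a)\Rightarrow(c)$: writing $B=B_1\cdots B_N$ with $B_j$ a NIBP of zero set $\Lambda_j$, fix $z\in\D$ and group the $N$ pseudohyperbolically closest zeros of $B$ to $z$ according to the factor they belong to, say $n_j$ of them in $\Lambda_j$ with $\sum_jn_j=N$; these are automatically the $n_j$ zeros of $B_j$ closest to $z$, so $B(N)=\prod_j\widetilde B_j$ where $\widetilde B_j$ is $B_j$ with its $n_j$ closest zeros to $z$ removed, and dividing the estimate $|B_j(z)|\ge e^{-H_j(z)}\rho(z,\Lambda_j)$ (valid for a NIBP by \eqref{intN2}) by the at most $n_j$ factors $\rho(z,\cdot)\le 1$ still leaves $|\widetilde B_j(z)|\ge e^{-H_j(z)}$, whence $|B(N)(z)|\ge e^{-\sum_j H_j(z)}$. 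For $(c)\Rightarrow(b)$: write $B=B(N)\prod_{i=1}^N b_{\lambda_{(i)}}$ with $\lambda_{(1)},\dots,\lambda_{(N)}$ the $N$ closest zeros of $B$ to $z$; since $\rho(z,\lambda_{(i)})\ge\rho(z,\Lambda)$ one gets $|B(z)|\ge|B(N)(z)|\,\rho(z,\Lambda)^N\ge e^{-H_2(z)}\rho(z,\Lambda)^N$. (Replacing the $\lambda_{(i)}$ by the $\Lambda_j$ and using $\rho(z,\Lambda_j)\ge\rho(z,\Lambda)$ gives $(a)\Rightarrow(b)$ directly, as well.)

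The implication $(c)\Rightarrow(d)$ is more involved but still essentially local. Under $(c)$ one has $\rho(z,Z(B(N)))\ge|B(N)(z)|\ge e^{-H_2(z)}$, so the factor $G:=B(N)$ of $B$ is zero-free on the pseudohyperbolic disk $D(z,e^{-H_2(z)}/2)$. Transplanting by the Möbius automorphism $\varphi_z$ sending $z$ to $0$, I would write $B=P\cdot G$ with $P$ a finite Blaschke product of degree $\le N$ (the $N$ closest zeros), control $|G|$ and its derivatives at $z$, on a disk of Euclidean radius $\asymp e^{-H_2(z)}(1-|z|)$, by fixed powers of $e^{-H_2(z)}$ (Harnack for the positive harmonic function $-\log|G\circ\varphi_z^{-1}|$), and observe that $\sum_{j=0}^N|(P\circ\varphi_z^{-1})^{(j)}(0)|\ge c_N>0$ by compactness of the family of finite Blaschke products of degree $\le N$; inserting these into the Leibniz formula for $B^{(j)}$, $0\le j\le N$, and undoing the transplantation yields $D_N(B)(z)\ge e^{-H_3(z)}$ for a suitable $H_3\in\Harmd$. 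Together with the preceding paragraph this gives $(a)\Rightarrow(c)\Rightarrow(d)$, so all that remains is $(b)\Rightarrow(a)$ and $(d)\Rightarrow(a)$.

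I would prove both in two stages. \emph{Stage 1: reduction to Theorem A.} If $(b)$ holds and $\rho(z,\Lambda)\ge e^{-H(z)}$, then $|B(z)|\ge e^{-H_1(z)-NH(z)}$, so $B$ satisfies condition $(b)$ of Theorem A and is therefore a finite product of NIBP's. If $(d)$ holds, the same conclusion follows after a short computation: on $\{\rho(z,\Lambda)\ge e^{-H(z)}\}$ the function $\log|B|$ is harmonic on a pseudohyperbolic disk of radius $\asymp e^{-H(z)}$, so Harnack-type gradient estimates give $(1-|z|)^j|B^{(j)}(z)|\lesssim_N e^{jH(z)}|B(z)|\bigl(1-\log|B(z)|\bigr)^j$ for $j\le N$, and combining the resulting bound $D_N(B)(z)\lesssim_N e^{NH(z)}|B(z)|\bigl(1-\log|B(z)|\bigr)^N$ with the elementary inequality $(1+t)^Ne^{-t}\le C_Ne^{-t/2}$ forces $|B(z)|\ge e^{-H_2(z)}$ on that region, again giving the hypothesis of Theorem A. In both cases, then, $B=B_1\cdots B_M$ for some finite, minimal $M$.

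\emph{Stage 2: the count $M\le N$.} This is the heart of the theorem, and the hard part. Suppose $M\ge N+1$. Minimality of $M$ means $\Lambda$ cannot be partitioned into $N$ sequences each satisfying the Nevanlinna interpolation condition \eqref{intN}, and the plan is to convert this combinatorial obstruction into a quantitative failure of $(b)$ and $(d)$. I would extract a subsequence of zeros near which $N+1$ zeros cluster at scales $\varepsilon\to0$, in such a way that the associated "splitting defect'' admits no positive harmonic majorant, and then invoke Theorem B to upgrade this to a robust non-majorability statement (one that survives multiplication by a divergent sequence). Choosing for each cluster a point $z$ at pseudohyperbolic distance $\asymp\varepsilon$ from all $N+1$ of its zeros, one has at these points $|B(z)|\lesssim_N\varepsilon\,\rho(z,\Lambda)^N$ and, since $B$ factors near $z$ as a Blaschke product of degree $\ge N+1$ carried by the cluster times a slowly varying factor, also $D_N(B)(z)\lesssim_N\varepsilon\,e^{O(H(z))}$; by the robust non-majorability no $H\in\Harmd$ can make $|B(z)|\ge e^{-H(z)}\rho(z,\Lambda)^N$ hold at all these points, nor $D_N(B)(z)\ge e^{-H(z)}$, contradicting $(b)$, respectively $(d)$. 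Hence $M\le N$, and padding $B_1\cdots B_M$ with $N-M$ one-zero Blaschke products (trivially NIBP's by \eqref{intN}) realizes $B$ as a product of exactly $N$ NIBP's, i.e.\ $(a)$. The main obstacle is precisely this last step: as in the proof of Theorem A, the passage from "$\Lambda$ is not a union of $N$ Nevanlinna interpolating sequences'' to a genuine violation of a harmonic-majorant bound cannot be made pointwise --- sparse deep clusters remain compatible with $(b)$ and $(d)$ --- so Theorem B, which supplies exactly this kind of global harmonic-majorant pigeonholing, is indispensable.
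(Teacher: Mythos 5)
Your cycle is a valid one, and your \(\mathrm{(c)}\Rightarrow\mathrm{(b)}\) is exactly the paper's; Stage 1 of \(\mathrm{(b)}\Rightarrow\mathrm{(a)}\) (reducing via Theorem A to \(B\) being \emph{some} finite product of NIBPs) is also sound. But Stage 2 — the step you call ``the heart of the theorem'' and for which you declare Theorem B ``indispensable'' — rests on a misconception, and the plan as described would not execute. Theorem B plays no role whatsoever in the paper's proof of Theorem C. The fixed exponent \(N\) in (b) (and the fixed degree \(N\) in (d)) is precisely what permits a \emph{pointwise} counting argument, unlike the qualitative Nevanlinna--WEP hypothesis of Theorem A. Concretely, the paper sets \(M := \# \Lambda\cap D_{H_1}(z)\), compares the trivial upper bound \(\max_{D_{H_1}(z)}|B|\le e^{-MH_1(z)}\) against the lower bound obtained by plugging the area estimate \(\max_{w\in D_{H_1}(z)}\rho(w,\Lambda)\ge M^{-1/2}e^{-H_1(z)}\) into (b), and obtains \(M-\log M\le N+2\) at every \(z\); a short further computation on the circle \(\partial D_{3H_1}(z)\) then forces \(\#\Lambda\cap D_{4H_1}(z)\le N\). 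For (d), a pointwise Cauchy-estimate argument shows directly that if \(N+1\) zeros sit in \(D_{H_3+C_1}(z)\) then \(D_N(B)(z)\) is too small. Your intuition that ``sparse deep clusters remain compatible with (b) and (d)'' is exactly backwards for Theorem C: a cluster of \(N+1\) zeros at pseudohyperbolic scale \(\varepsilon\) near \(z\) makes \(|B|/\rho(\cdot,\Lambda)^N\lesssim\varepsilon\) and \(D_N(B)\lesssim\varepsilon\) nearby, so (b)/(d) \emph{do} exclude them pointwise once \(\varepsilon< e^{-CH_1(z)}\), and (b)/(d) therefore imply the cluster bound directly.

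Moreover, even granting the contradiction framework, you could not hope to hand Theorem B a non-majorizable input. What you would need is: ``\(\Lambda\) not a union of \(N\) NIBPs'' \(\Rightarrow\) ``\(\lambda_k\mapsto\log\rho_{N+1}^{-1}(\lambda_k)\) has no harmonic majorant.'' That implication is false: \(\Lambda\) could be a union of \(N\) \(H\)-separated sequences that simply fail \eqref{intN}, in which case \(\rho_{N+1}(\lambda_k)\ge e^{-cH(\lambda_k)}\) and the ``splitting defect'' is majorized. What your plan is genuinely missing is the second half of the paper's proof: once \(\#\Lambda\cap D_{4H_1}(z)\le N\) is established and \(\Lambda\) is split into \(N\) \(H\)-separated pieces \(\Lambda_j\), one still has to show each \(\Lambda_j\) \emph{is} Nevanlinna interpolating, which the paper does from (b) via the minimum principle applied to \(B_j/b_\lambda\) on a suitable circle \(\partial D_{\beta H_1}(\lambda)\), recovering \eqref{intN}. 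Two smaller issues: in your \(\mathrm{(a)}\Rightarrow\mathrm{(c)}\), when \(n_j=0\) you assert \(|\widetilde B_j(z)|\ge e^{-H_j(z)}\), but then \(\widetilde B_j=B_j\) and \eqref{intN2} only gives \(|B_j(z)|\ge e^{-H_j(z)}\rho(z,\Lambda_j)\); you still need a lower bound on \(\rho(z,\Lambda_j)\), which the paper extracts by first choosing \(H\) so that \(\#\Lambda_j\cap D_H(z)\le1\). And in \(\mathrm{(c)}\Rightarrow\mathrm{(d)}\), a raw Leibniz expansion of \(B=PG\) risks cancellations among the terms composing \(D_N(B)\); the paper avoids this by lower-bounding a single divided difference \(\Delta^K B=B/\prod b_{\lambda_i}\) on a circle via the minimum principle and dominating \(D_N(B)\) by the corresponding single derivative term.
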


The equivalence between (a), (b) and (d) for $N=1$ was proved in \cite[Theorem 1.2]{HMN1}. On the other hand (b) can be seen as the Nevanlinna version of 
\[
 |B(z)|\geq \delta \rho^N\left(z,\Lambda\right),\hspace{1cm}z\in\mathbb{D},
\]
which is known to be equivalent to $B$ being a product of $N$ interpolating Blaschke products (see \cite[Theorem 3.6]{GM} or \cite[Proposition 1]{Bo1}).

A consequence of these characterizations is the following result.

\begin{corollary*}
Let $B$ be a finite product of Nevanlinna interpolating Blaschke products. Then, there exists $H_0=H_0(B)\in \Harmd$  such that for any $g\in H^\infty$ with $|g(z)|\leq e^{-H_0(z)}$, $z\in\mathbb{D}$, the function $B-g$ factors as $B-g=B_1G$, where $B_1$ is a finite product of Nevanlinna interpolating Blaschke products and $G\in H^\infty$ is such that $1/G\in H^\infty$.
\end{corollary*}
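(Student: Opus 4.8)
The plan is to use the characterization (c) of Theorem C together with a quantitative Rouché-type argument. Write $B = B_1 \cdots B_N$ as a product of $N$ Nevanlinna interpolating Blaschke products, and fix the harmonic majorant $H_2$ from Theorem C(c), so that $|B(N)(z)| \geq e^{-H_2(z)}$ for all $z$. The point is that $|B(N)(z)|$ controls, from below, the product of the distances from $z$ to all but the $N$ closest zeros of $B$; the perturbation $g$ with $|g| \leq e^{-H_0}$ will be small enough that it cannot destroy this separation of the ``far'' part of the zero set while only being able to move (or annihilate) zeros near the $N$ closest ones. So first I would record the elementary fact that on the pseudohyperbolic circle $\{w : \rho(z,w) = r\}$ one has, for the Blaschke factor, $|b_\lambda(w)| \geq$ a quantity comparable to $r$ when $\lambda$ is far and $\geq$ something like $|r - \rho(z,\lambda)|/(1 - \rho(z,\lambda) r)$ in general, and combine these over the far zeros to get a lower bound for $|B(N)|$ on such circles in terms of $H_2$ evaluated nearby (using Harnack to transfer the harmonic majorant from $z$ to points at pseudohyperbolic distance $r$).

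Second, I would set up the Rouché argument locally. Fix $z_0 \in \D$ and look at the pseudohyperbolic disk $D(z_0, r_0)$ for a suitable radius $r_0$ (something like a fixed fraction, or a radius depending mildly on $H$ near $z_0$). On the boundary of a slightly larger disk, $|B| \geq |B(N)| \cdot \prod_{j=1}^N \rho(w,\lambda_{k_j})$ where $\lambda_{k_1},\dots,\lambda_{k_N}$ are the $N$ closest zeros; since on that boundary circle each far factor is bounded below and the $N$ near factors are bounded below by the geometry of the circle (the near zeros are all inside), we get $|B(w)| \geq e^{-cH_2(w) - c'}$ on that circle for an appropriate constant. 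Choosing $H_0$ so that $e^{-H_0}$ is smaller than this bound (which requires $H_0 \gtrsim H_2$ plus a constant, with the comparison valid uniformly in $z_0$ via Harnack — this is where one must be careful that the ``near'' factors' lower bound on the circle is an absolute constant, not something degenerating), Rouché gives that $B - g$ has exactly $N$ zeros (with multiplicity) inside this disk, the same as $B$.

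Third, I would assemble the global factorization. The zeros of $B - g$ split into those inside each local disk $D(z_0, r_0)$ — but one needs a covering/Vitali argument to make the local counts globally consistent; in practice one shows that for every $z$, $|B(N)(z)| \geq e^{-H_2(z)}$ forces the zeros of $B-g$ to again have the property that after deleting the $N$ closest to any point, the remaining product is $\geq e^{-H_2'(z)}$ for a new harmonic majorant $H_2'$ (obtained from $H_2$, $H_0$, and absolute constants). Indeed, outside the union of near-disks $|B - g|$ and $|B|$ are comparable up to the factor $e^{-H_0}/|B|$, so the far part of the zero counting function of $B-g$ is controlled; near a given $z$, at most $N$ zeros of $B-g$ are close, matching $B$. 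Then applying Theorem C(c) $\Rightarrow$ (a) to $B - g / G$, where $G$ is the outer-times-singular factor of $B - g$ coming from inner–outer factorization (one must check $G \in H^\infty$ with $1/G \in H^\infty$: since $|B - g| \leq |B| + e^{-H_0} \leq 1 + e^{-H_0}$ is bounded, $B - g \in H^\infty$, and a lower bound $|B-g| \geq e^{-H(z)}$ near the boundary — available because $|B| \to 1$ a.e. and $g$ is uniformly small — shows the non-Blaschke factor is invertible in $H^\infty$; more carefully, $G$ is the quotient of $B-g$ by its Blaschke product $B_1$, and boundedness of $B-g$ together with the fact that $B_1$ has the same modulus boundary values forces $G$ and $1/G$ bounded), one concludes $B_1$ is a product of $N$ Nevanlinna interpolating Blaschke products.

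The main obstacle is the uniformity in the Rouché step: one must choose the local radius $r_0$ and verify that the lower bound for $|B(w)|$ on the comparison circle degrades only by a \emph{harmonic} amount (controlled by $H_2$ composed with a Harnack shift) plus an absolute constant, uniformly over all center points $z_0 \in \D$, including those very close to $\partial \D$ where the hyperbolic geometry is delicate and where the $N$ deleted zeros might be extremely close together. Handling the possibility that several of the $N$ near zeros nearly coincide (so their product of distances on the circle could be small) requires choosing the circle radius adaptively — e.g.\ picking $r_0$ in a range $[\rho_N(z_0), 2\rho_N(z_0)]$ where $\rho_N(z_0)$ is the pseudohyperbolic distance to the $N$-th closest zero, and using a pigeonhole to find a radius on which $|B|$ is not too small — a standard but somewhat technical maneuver. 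Once that uniform lower bound on circles is in hand, defining $H_0$ and running Rouché plus the covering argument is routine.
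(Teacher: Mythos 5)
Your overall strategy — a quantitative Rouch\'e argument with a pigeonhole selection of the comparison circle, feeding the perturbed zero set back into the characterizations of Theorem~C — is the same as the paper's, and you have correctly identified the main technical issue (uniformity of the lower bound on the Rouch\'e circle, especially when several of the $N$ near zeros nearly coincide) together with its fix (pick the radius adaptively; the paper takes one of the $N+1$ circles $\partial D_{kH_1}(\lambda)$, $k=1,\dots,N+1$, and uses that at most $N$ zeros lie in $D_{H_1}(\lambda)$ to find a circle on which $\rho(\cdot,\Lambda)\ge e^{-(N+1)H_1}$). Two organizational differences are worth noting. First, the paper centers its Rouch\'e disks at the zeros $\lambda\in\Lambda$ themselves rather than at arbitrary $z_0$, and then settles the global count with a \emph{single} application of Rouch\'e on an arbitrary contour $\Gamma$ staying at pseudohyperbolic distance $\ge e^{-H_1}$ from $\Lambda$; this eliminates the ``covering/Vitali'' step you gesture at, which is in fact not needed. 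Second, and more substantively: having shown that each $\lambda$ moves to a nearby zero $\lambda'$ of $B-g$ with no new zeros created, the paper does \emph{not} re-derive a lower bound of the form $|B_1(N)(z)|\ge e^{-H_2'(z)}$ for the new Blaschke product; it instead splits $Z(B-g)$ along the original decomposition $\Lambda=\cup_j\Lambda_j$ and invokes the stability of Nevanlinna interpolating sequences under small pseudohyperbolic perturbations, which is \cite[Corollary~2.3]{HMN1}. Your route (re-verify Theorem~C(c) for $B_1$ from scratch) is viable but strictly more work, and you have only sketched it; the claim ``the far part of the zero counting function of $B-g$ is controlled'' is exactly the content that needs a careful proof if one does not have the stability lemma in hand. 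Using (c) instead of (b) of Theorem~C is harmless, and your treatment of $G$, $1/G\in H^\infty$ via a.e.\ boundary estimates matches the paper's.
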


The paper is structured as follows. In the first section we describe the setup and gather several auxiliary results to be used in the main proofs. Section 2 is devoted to prove the equivalence between (a) and (b) in Theorem A, which, as mentioned before, follows the scheme of the analogue for the $H^\infty$-case (see \cite[Theorem 3.3]{GMN}). Section 3 contains the proof of Theorem B, which is essential in the proof of the main implication given in Section 4. We give the statements of the analogous results for the Smirnov class in 
Section \ref{smirnov},
and how to adapt the proofs to this case. 
Finally, Section \ref{proofc} gives the proof of Theorem C and the Corollary.

We would like to thank our colleague Alexander Borichev for fruitful discussions 
on the topic of this paper.

\section{Preliminaries}

The Nevanlinna class consists of the holomorphic functions $f$ in the unit disk $\D$ for which $\log^+|f|$ has a positive harmonic majorant.  
Recall that any $H\in \Har_+(\D)$ is the Poisson integral of a positive measure $\mu$ on the unit circle, that is 
\[
 H(z)=P[\mu](z)=\int_{\partial\D} P(z,\zeta) d\mu(\zeta),
\]
where
\[
 P(z,\zeta)=\Re\left(\frac{\zeta+z}{\zeta-z}\right)=\frac{1-|z|^2}{|\zeta-z|^2}
\]
is the Poisson kernel in $\D$.

It is well-know that a holomorphic function $f$ in $\D$ is in $\N$ if and only if
\[
\lim_{r \to 1}\frac{1}{2\pi}\int_{0}^{2\pi} \log^+|f(re^{i\theta})|\;d\theta<\infty .
\]
Then $f$ admits non-tangential limits $f^*$ at almost every point of the circle. 
It is also a standard fact that  $f\in \N$ can be factored as
$
 f=B S_1 E / S_2,
$
where $B$ is a Blaschke product containing the zeros of $f$, $S_1, S_2$ are singular inner functions and $E$ is the outer function
\[
 E(z)= C\exp\left\{\int_{\partial\D} \frac{\zeta+z}{\zeta-z} \log|f^*(\zeta)| d\sigma(\zeta)\right\},
\]
where $|C|=1$. In particular
\[
 \log |E(z)|= P[\log|f^*|](z),\quad z\in\D\ .
\]
A function $S$ is singular inner if there exists a positive measure $\mu$ on $\partial\D$ singular with respect to the Lebesgue measure such that
\[
 S(z)=\exp\left\{- \int_{\partial\D} \frac{\zeta+z}{\zeta-z} d\mu(\zeta) \right\},\quad z\in\D\ .
\]

Throughout the proofs we will use repeatedly the well-known \emph{Harnack inequalities}: for $H\in\Har_+(\mathbb D)$ and $z,w\in\mathbb \D$,
\[
 \frac{1-\rho(z,w)}{1+\rho(z,w)}\leq\frac{H(z)}{H(w)}\leq\frac{1+\rho(z,w)}{1-\rho(z,w)}\ .
\]
We shall always assume, without loss of generality, that $H\in\Har_+(\mathbb D)$ is big enough so that for  $z\in D_H(\lambda)$ the inequalities
$1/2\leq H(z)/H(\lambda)\leq 2$ hold. Actually it is sufficient to assume $\inf\{H(z): z \in \D \} \geq \log 3$.

Our first auxiliary result states that a Blaschke sequence is in a sense sparse in $D_H(z)$, if $H$ is appropriately chosen. Consider the usual partition of $\mathbb D$ into the dyadic (Whitney) squares
\[
 Q_{k,j}=\bigl\{z=re^{i\theta}\in\mathbb D : 1-2^{-k}\leq r< 1-2^{-k-1}\ ,\ j\frac{2\pi}{2^k}\leq \theta<(j+1)\frac{2\pi}{2^k}\bigr\},
\]
where $k\geq 0$ and $ j=0,\dots 2^k-1$. Consider also the corresponding projections on $\mathbb T$:
\[
 I_{k,j}=\bigl\{e^{i\theta}\in\mathbb T : \ j\frac{2\pi}{2^k}\leq \theta<(j+1)\frac{2\pi}{2^k}\bigr\}\ .
\]
Given $z\in \D$ denote by $Q_z$ the dyadic region $Q_{n,k}$ such that $z\in Q_{n,k}$.

As explained in the Introduction, in the context of $H^\infty$ there exist WEP Blaschke products which are not finite products of interpolating Blaschke products. The construction of such examples is based on the fact that there are Blaschke sequences whose hyperbolic neighbourhood is large. More concretely, there exist a Blaschke sequence $\Lambda$ such that for any $\varepsilon >0$, the set $\{z \in \D : \rho(z, \Lambda) < \varepsilon \}$ contains a horodisc. Next auxiliary result says that in the context of the Nevanlinna class such examples do not exist. 

\begin{lemma}\label{far}
 Let $\Lambda$ be a Blaschke sequence. For each Whitney square $Q$ let $N(Q):=\# \Lambda\cap Q$. Define $H_\Lambda=P[\psi_\Lambda]$, where
 \[
  \psi_\Lambda=\sum_{n,k} \log N(Q_{n,k})\, \chi_{I_{n,k}}\ .
 \]
 There exists $c_0>0$ such that for all $H\in\Harmd$ such that $H\geq c_0 H_\Lambda$ the following property holds: 
 for all $z\in\D$ there exists $ \tilde z\in D_H(z)$ such that $\rho(\tilde z, \Lambda)\geq e^{-10 H(z)}$.

\end{lemma}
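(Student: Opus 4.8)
The plan is to fix $z\in\D$, set $r:=e^{-H(z)}$, and find $\tilde z$ inside $D(z,r)$ by a volume argument: if the ``forbidden'' set $F:=\bigcup_{\lambda\in\Lambda}D(\lambda,r^{10})$ fails to cover $D(z,r)$, then any $\tilde z\in D(z,r)\setminus F$ works, since it satisfies $\tilde z\in D_H(z)=D(z,r)$ and $\rho(\tilde z,\Lambda)\ge r^{10}=e^{-10H(z)}$. By the triangle inequality for $\rho$, a disk $D(\lambda,r^{10})$ can meet $D(z,r)$ only when $\rho(\lambda,z)<r+r^{10}<2r$, so it suffices to estimate $M:=\#\bigl(\Lambda\cap D(z,2r)\bigr)$. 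Comparing hyperbolic areas, with the hyperbolic area of $D(a,t)$ equal to $\pi t^2/(1-t^2)$ for every $a$, one sees that $F$ does not cover $D(z,r)$ as soon as $M\cdot\dfrac{\pi r^{20}}{1-r^{20}}<\dfrac{\pi r^2}{1-r^2}$; since $1-r^2<1-r^{20}$, this is implied by $M<r^{-18}=e^{18H(z)}$.

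To bound $M$, recall the standing assumption $\inf_{\D}H\ge\log3$, so $r\le1/3$ and $D(z,2r)\subseteq D(z,2/3)$. As the Whitney squares tile $\D$, have uniformly bounded hyperbolic diameter, and each contains a hyperbolic ball of radius bounded below, the fixed-size disk $D(z,2/3)$ meets at most $L_0$ of them, $L_0$ an absolute constant; call them $Q_1,\dots,Q_L$ with $L\le L_0$. Then $M\le\sum_{i=1}^{L}N(Q_i)\le L_0\max_{1\le i\le L}N(Q_i)$. For each $i$, pick $w_i\in Q_i\cap D(z,2r)$ and write $Q_i=Q_{n,k}$. Adopting the convention $\log N(Q):=\log^+N(Q)$ (so that $\psi_\Lambda\ge0$ and, by the Blaschke condition through $\sum_{n,k}\log N(Q_{n,k})2^{-n}\le\sum_{n,k}N(Q_{n,k})2^{-n}\asymp\sum_\lambda(1-|\lambda|)<\infty$, indeed $H_\Lambda=P[\psi_\Lambda]\in\Harmd$), we have $\psi_\Lambda\ge(\log N(Q_i))\chi_{I_{n,k}}$, hence $H_\Lambda(w_i)\ge(\log N(Q_i))\,P[\chi_{I_{n,k}}](w_i)$. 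Now $P[\chi_{I_{n,k}}](w)\ge c$ for all $w\in Q_{n,k}$, with $c>0$ absolute — a direct Poisson-kernel estimate using $1-|w|^2\asymp|\zeta-w|\asymp2^{-k}$ for $\zeta\in I_{n,k}$ — so $H_\Lambda(w_i)\ge c\log N(Q_i)$; on the other hand $\rho(w_i,z)<2/3$, so Harnack's inequality gives $H_\Lambda(w_i)\le5\,H_\Lambda(z)$. Combining, and using $H\ge c_0H_\Lambda$, we get $\log N(Q_i)\le\dfrac{5}{c}H_\Lambda(z)\le\dfrac{5}{cc_0}H(z)$, whence $M\le L_0\exp\!\bigl(\tfrac{5}{cc_0}H(z)\bigr)$.

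It remains to choose $c_0$. Taking $c_0$ large enough that $5/(cc_0)\le9$ gives $M\le L_0e^{9H(z)}$, which is strictly less than $e^{18H(z)}$ provided $e^{9H(z)}>L_0$; since $H(z)\ge\log3$ this holds whenever $L_0<3^9$, which is the case because $L_0$ is an absolute (in fact small) constant. Hence $M<e^{18H(z)}$, so $F$ does not cover $D(z,r)$; any $\tilde z\in D(z,r)\setminus F$ is then the desired point, and $c_0$ depends only on absolute constants.

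The step I expect to be most delicate is the careful quantification of the two geometric inputs: (i) that a pseudohyperbolic disk of fixed radius meets only boundedly many Whitney squares, which should be stated precisely via uniform upper and lower bounds for the hyperbolic size of the $Q_{n,k}$; and (ii) the uniform lower bound $P[\chi_{I_{n,k}}](w)\ge c$ on $Q_{n,k}$, which although elementary must be carried out with enough care to keep $c$ absolute, since the argument rests on comparing the exponents $5/(cc_0)$ and $18$. Everything else is bookkeeping: the triangle inequality for $\rho$, the M\"obius-invariant formula for hyperbolic area, Harnack's inequality, and the summability of $\psi_\Lambda$.
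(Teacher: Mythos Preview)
Your proof is correct and follows essentially the same area/counting argument as the paper: pack (or cover) $D_H(z)$ with tiny pseudohyperbolic disks of radius $e^{-10H(z)}$ and show that $\Lambda$ cannot obstruct all of them because the number of points of $\Lambda$ in a bounded hyperbolic neighborhood of $z$ is at most $e^{CH_\Lambda(z)}$. Your write-up is in fact more careful than the paper's sketch—you spell out the bound on the number of Whitney squares meeting $D(z,2/3)$, the Poisson lower bound $P[\chi_{I_{n,k}}]\ge c$ on $Q_{n,k}$, and the Harnack transfer from $w_i$ back to $z$, whereas the paper compresses all of this into the single line $\#(\Lambda\cap D_H(\lambda_k))\le N(Q_{\lambda_k})\le e^{CH_\Lambda(\lambda_k)}$. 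Two cosmetic points: when you list $Q_1,\dots,Q_L$ it is cleaner to take only those squares meeting $D(z,2r)$ (still at most $L_0$ of them), so that $w_i\in Q_i\cap D(z,2r)$ is guaranteed nonempty; and the phrase ``triangle inequality for $\rho$'' should really be the strong form $\rho(a,c)\le(\rho(a,b)+\rho(b,c))/(1+\rho(a,b)\rho(b,c))$, which of course implies what you use.
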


\begin{proof}
 There is no restriction in assuming that $z$ is such that $\Lambda\cap D_H(z)\neq\emptyset$. Pick $\lambda_k\in \Lambda\cap D_H(z)$. An area estimate shows that  $D_H(z)$ contains approximately $e^{18 H(z)}$ pairwise disjoint disks $D_j$ or radius $e^{-10 H(z)}$:
 \[
  \frac{\textrm{Area}(D_H(z))}{\textrm{Area}(D_j)}\simeq \frac{e^{-2 H(z)}}{e^{-20 H(z)}}=e^{-18 H(z)}.
 \]
Since $\# \Lambda\cap D_H(\lambda_k)\leq N(Q_{\lambda_k})$, by the definition of $H_\Lambda$ we deduce that there exists a universal constant $C>0$ 
such that
 \[
  \#\{j\, :\, \Lambda\cap D_j\neq \emptyset \}\leq N(Q_{\lambda_k}) \leq e^{CH_\Lambda(\lambda_k)}.
 \]
This shows that there exists $D_j$ which contains no point of $\Lambda$; we may take as $\tilde z$ the centre of such $D_j$.
\end{proof}

The next result will be used in the proof of the main implication (b)$\Rightarrow$(c) in Theorem A.

\begin{lemma}\label{subproduct}
 Any subproduct of a Nevanlinna WEP Blaschke product is also a Nevanlinna WEP Blaschke product.
\end{lemma}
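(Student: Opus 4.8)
The plan is to show that if $B$ is Nevanlinna WEP, meaning it satisfies condition (b) of Theorem A, and $B = B_1 B_2$ is a factorization into Blaschke products with zero sets $\Lambda_1, \Lambda_2$ respectively (so $\Lambda = \Lambda_1 \cup \Lambda_2$ as a multiset), then $B_1$ is Nevanlinna WEP. It suffices to treat the case where $B_2$ is a single Blaschke factor $b_\mu$, and then iterate; but since a subproduct may remove infinitely many zeros, I will instead argue directly with a general factorization. The key observation is that $\Lambda_1 \subset \Lambda$, so for every $z$ we have $\rho(z, \Lambda_1) \geq \rho(z, \Lambda)$, and hence the region $\{z : \rho(z, \Lambda_1) \geq e^{-H_1(z)}\}$ where we must control $|B_1|$ from below is \emph{contained} in $\{z : \rho(z,\Lambda) \geq e^{-H_1(z)}\}$, where we already know $|B| = |B_1||B_2| \geq e^{-H_2(z)}$. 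Thus $|B_1(z)| \geq e^{-H_2(z)}/|B_2(z)| \geq e^{-H_2(z)}$, since $|B_2| \leq 1$. Wait --- this already gives the bound on the smaller set, so the naive argument nearly works; the only gap is that (b) requires the estimate for \emph{every} $H_1 \in \Harmd$, and the set on which we need to estimate $|B_1|$ is exactly the one where $\rho(z,\Lambda_1)\geq e^{-H_1(z)}$, which is contained in $\{\rho(z,\Lambda)\geq e^{-H_1(z)}\}$. So applying (b) for $B$ with the \emph{same} $H_1$ produces $H_2$ with $|B(z)| \geq e^{-H_2(z)}$ there, whence $|B_1(z)| \geq |B(z)| \geq e^{-H_2(z)}$. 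Taking $H_3 = H_2$ verifies (b) for $B_1$.

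So the proof is genuinely short, but I should double-check the one subtlety: does removing zeros ever \emph{decrease} $\rho(z,\cdot)$? No --- removing points from a set can only increase (or keep equal) the infimum distance to that set, so $\rho(z,\Lambda_1) \geq \rho(z,\Lambda)$ always holds when $\Lambda_1 \subseteq \Lambda$ as multisets. Consequently $\{z : \rho(z,\Lambda_1) \geq e^{-H_1(z)}\} \subseteq \{z : \rho(z,\Lambda)\geq e^{-H_1(z)}\}$. This containment is the heart of the matter. I would also remark that one should first record that $\Lambda_1$ is still a Blaschke sequence (immediate, as a subset of a Blaschke sequence), and that if $\Lambda_1$ is finite the statement is trivial since then $B_1$ is a finite Blaschke product and (b) holds vacuously-or-easily.

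The main (very minor) obstacle, if any, is purely bookkeeping: making sure that ``subproduct'' is interpreted correctly for infinite Blaschke products with multiplicities, i.e. that $B_1$ divides $B$ in the sense that $B/B_1$ is again a Blaschke product (equivalently a bounded analytic function), so that $|B_1(z)| \geq |B(z)|$ pointwise in $\D$. Once that is fixed, the chain of inequalities
\[
|B_1(z)| \;\geq\; |B_1(z)|\,|B_2(z)| \;=\; |B(z)| \;\geq\; e^{-H_2(z)}
\]
on the set $\{z:\rho(z,\Lambda_1)\geq e^{-H_1(z)}\}\subseteq\{z:\rho(z,\Lambda)\geq e^{-H_1(z)}\}$ closes the argument, and we conclude by noting that $H_1\in\Harmd$ was arbitrary.

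\begin{proof}
Let $B$ be a Nevanlinna WEP Blaschke product with zero set $\Lambda$, and let $B_1$ be a subproduct of $B$, with zero set $\Lambda_1\subseteq\Lambda$ (as multisets). Write $B=B_1B_2$, where $B_2$ is the Blaschke product with zero set $\Lambda\setminus\Lambda_1$; in particular $|B_2(z)|\leq 1$ and hence $|B_1(z)|\geq |B(z)|$ for every $z\in\D$. Since $\Lambda_1\subseteq\Lambda$, for every $z\in\D$ we have $\rho(z,\Lambda_1)\geq\rho(z,\Lambda)$, so that
\[
\{z\in\D : \rho(z,\Lambda_1)\geq e^{-H_1(z)}\}\subseteq\{z\in\D : \rho(z,\Lambda)\geq e^{-H_1(z)}\}
\]
for any $H_1\in\Harmd$.

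Now fix $H_1\in\Harmd$. By condition (b) of Theorem A applied to $B$, there exists $H_2\in\Harmd$ such that $|B(z)|\geq e^{-H_2(z)}$ for every $z\in\D$ with $\rho(z,\Lambda)\geq e^{-H_1(z)}$. By the inclusion above, for every $z\in\D$ with $\rho(z,\Lambda_1)\geq e^{-H_1(z)}$ we then have
\[
|B_1(z)|\geq |B(z)|\geq e^{-H_2(z)}.
\]
Thus $B_1$ satisfies condition (b) of Theorem A with $H_1$ and $H_2$, and since $H_1\in\Harmd$ was arbitrary, $B_1$ is a Nevanlinna WEP Blaschke product.
\end{proof}
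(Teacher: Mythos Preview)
Your argument contains a fatal error in the set inclusion. You correctly observe that $\Lambda_1\subseteq\Lambda$ implies $\rho(z,\Lambda_1)\geq\rho(z,\Lambda)$, but the inclusion you deduce from this goes the \emph{wrong way}. From $\rho(z,\Lambda_1)\geq\rho(z,\Lambda)$ one gets
\[
\{z:\rho(z,\Lambda)\geq e^{-H_1(z)}\}\ \subseteq\ \{z:\rho(z,\Lambda_1)\geq e^{-H_1(z)}\},
\]
not the reverse. Concretely: a point $z$ can be far from $\Lambda_1$ (so it lies in the set where you must bound $|B_1|$) while being very close to a zero of $B_2$, hence close to $\Lambda$; at such a point $|B(z)|$ may be tiny and the WEP hypothesis on $B$ gives you nothing directly. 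This is exactly the case your argument does not cover, and it is not a bookkeeping issue---it is the whole difficulty of the lemma.

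The paper handles precisely this missing case. Given $z$ with $\rho(z,\Lambda_1)\geq e^{-H_1(z)}$, either $z$ is also far from $\Lambda_2$ (and then your inequality $|B_1(z)|\geq|B(z)|\geq e^{-H_2(z)}$ works), or $z$ is close to $\Lambda_2$. In the latter case one invokes Lemma~\ref{far} to find a nearby point $\tilde z$ (with $\rho(\tilde z,z)$ very small) that \emph{is} far from $\Lambda$; the WEP of $B$ then controls $|B_1(\tilde z)|\geq|B(\tilde z)|$, and since $B_1$ has no zeros in a small disk around $z$, Harnack's inequality applied to $-\log|B_1|$ transfers the bound from $\tilde z$ back to $z$. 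Without some device of this kind the proof cannot be completed.
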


\begin{proof}
 Assume $B=B_1B_2$ is a Nevanlinna WEP Blaschke product and let $\Lambda$ its zero sequence. Denote also by $\Lambda_i$, $i=1,2$, the zero sequences of $B_i$, $i=1,2$. We want to show that $B_1$ is also a Nevanlinna WEP Blaschke product.
 Let $H_1\in\Harmd$ be such that $\rho(z,\Lambda_1)\geq e^{-H_1(z)}$. There is no restriction in assuming that $H_1$ satisfies also the conditions of Lemma~\ref{far}. Since $B$ is Nevanlinna WEP, if $z$ is such that moreover $\rho(z,\Lambda_2)\geq e^{-10 H_1(z)}$,
 there exists $H_2\in\Harmd$ such that  
 \[
  |B_1(z)|\geq |B(z)|\geq e^{-H_2(z)}.
 \]
 In case $\rho(z,\Lambda_2)\leq e^{-10 H_1(z)}$, by Lemma~\ref{far} we can pick $\tilde z\in\D$ with $\rho(\tilde z,z)\leq e^{-10 H_1(z)}$ and $\rho(\tilde z,\Lambda_2)\geq e^{-100 H_1(z)}$. Hence $\rho(\tilde z,\Lambda)\geq e^{-100 H_1(z)}$ and since $B$ is a Nevanlinna WEP Blaschke product, there exists $H_3\in\Harmd$ such that
 \[
  |B_1(\tilde z)|\geq |B(\tilde z)|\geq e^{-H_3(\tilde z)}.
 \]
 Since $B_1$ has no zeros in $D(z,e^{-5 H_1(z)})$, Harnack's inequalities applied in that disc give
 \[
 |B_1( z)|\geq e^{-2 H_3(z)}.
 \]
\end{proof}

\section{Proof of Theorem A $(a)\Leftrightarrow(b)$}

(a)$\Rightarrow$(b). Assume that given $f\in\N$ there exist $g\in \N$  such that $ f  g-1\in  B\N$ if and only if there exists $H\in\Harmd$ such that $ |f(\lambda)|\geq e^{-H(\lambda)}$, $\lambda\in\Lambda$. For any positive harmonic function $H_1$ we need to show that $ - \log |B|$
admits a harmonic majorant on the set
\[
\left\{ z\in \D: \rho(z,\Lambda) \ge e^{-H_1(z)} \right\}.
\]
Increasing the function $H_1$, we only make this set larger, so we can use Lemma \ref{far}
and replace $H_1$ by another positive harmonic function denoted again by $H_1$ so that for any
$z\in \D$, there exists $w$ such that $\rho(z,w)\le \frac12$ and $\rho(w,\Lambda) \ge e^{-H_1(z)}$. Now for each Whitney cube $Q_{k,j}$ so that 
\[ 
\overline Q_{k,j} \cap \left\{ z\in \D: \rho(z,\Lambda) \le \frac12 \right\} \neq \emptyset,
\]
choose a point $a_{k,j}$ such that 
\[
\log \frac1{|B(a_{k,j})|} = \max \left\{ \log \frac1{|B(z)|}: z \in \overline Q_{k,j},
\rho(z,\Lambda) \ge e^{-H_1(z)} \right\}.
\]
Let $A$ be the sequence made up of all the $a_{k,j}$. Since each closed Whitney cube intersects exactly $8$ other cubes, a standard combinatorial lemma shows that
 $A$ is the union of  $8$ disjoint sequences $A_j$  so that each $A_j$ never contains two points in neighboring 
 cubes, and therefore is $\rho$-separated with an absolute constant $r_0>0$.  It is easy to see that 
 each $A_j$, being separated and in a fixed hyperbolic neighborhood of $\Lambda$,
is again a Blaschke sequence. So, by \cite[Corollary 1.9]{HMNT}, each $A_j$
 is a Nevanlinna interpolating sequence, and even a Smirnov interpolating sequence. Let $B_A$ be the Blaschke product with zero set $A$,
 then by \eqref{intN2} 
 there exists a harmonic function $H_A>0$ such that 
 \[ 
 \log |B_A(z)| \ge  -H_A(z) + 8 \log \rho (z,A).
 \]
 In particular, for $z \in \Lambda$, $\log |B_A(z)| \ge  -H_A(z) - 8 H_1 (z)$. So property (a)
 implies that there exist  functions $g,g_A \in \N$ so that $gB+g_AB_A=1$. In particular, 
 on the set $A$, $B(a_{k,j})= g(a_{k,j})^{-1}$, so applying the canonical factorization to $g$,
 we see that $a_{k,j} \mapsto  - \log |B(a_{k,j})|$ admits a positive harmonic majorant, 
 which we denote by $H_2$. 
 
The set
 \[
 \left\{ z\in \D: \rho(z,\Lambda) \le \frac12 \right\}
 \]
 is covered by the union of the $Q_{k,j}$
 for $a_{k,j} \in A$. Since each of those cubes is of bounded hyperbolic diameter, Harnack's inequality
 implies that there exists a constant $C_1$ such that for all $z\in Q_{k,j}$ such that
 $\rho(z,\Lambda) \ge e^{-H_1(z)}$,
 \[
 C_1 H_2(z) \ge H_2(a_{k,j}) \ge - \log |B(a_{k,j})| \ge - \log |B(z)| ;
 \]
on the other hand, when  $\rho(z,\Lambda) \ge 1/2$, by \cite[Proposition 4.1]{HMNT}, 
$- \log |B(z)|\le C P[\psi]$, where $\psi = \sum_{\lambda \in \Lambda} \chi_{I_\lambda}$,
with $I_\lambda$ the Privalov shadow of $\lambda$. Summing those two majorants, we do have a
positive harmonic majorant for $-\log |B|$ on the set where  $\rho(z,\Lambda) \ge e^{-H_1(z)}$. This finishes the proof. 
\bigskip


The proof of the converse implication uses the following elementary lemma.

\begin{lemma}
\label{schwarz}
 Let $f\in\N$ with $H_0\in\Harmd$ such that $|f(z)|\leq e^{H_0(z)}$, $z\in\D$. For any $\lambda\in\D$,
 \[
  |f(z)-f(\lambda)|\leq 6 \rho(z,\lambda)\, e^{2H_0(\lambda)}, \quad z\in D_{\log 3}(\lambda).
 \]
\end{lemma}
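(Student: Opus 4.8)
\textbf{Proof plan for Lemma \ref{schwarz}.}

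The plan is to reduce the estimate to a statement about a single bounded holomorphic function on a fixed pseudohyperbolic disk and then apply the Schwarz--Pick lemma. First I would move the base point to the origin: let $\varphi = b_\lambda$ be the disk automorphism sending $\lambda$ to $0$, and consider $g = f\circ\varphi^{-1}$, which is again in $\N$. The point $z$ lies in $D_{\log 3}(\lambda)$ precisely when $w := \varphi(z)$ lies in $D(0,1/3) = \{|w|<1/3\}$, and $\rho(z,\lambda) = |w|$, so it suffices to bound $|g(w) - g(0)|$ by $6|w|\,e^{2H_0(\lambda)}$ for $|w| < 1/3$.

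Next I would control $g$ on a slightly larger disk, say $\{|w| \le 1/2\}$, using the hypothesis $|f|\le e^{H_0}$ together with Harnack's inequality. Indeed, every point $\varphi^{-1}(w)$ with $|w|\le 1/2$ lies in $D(\lambda, 1/2)$, so by Harnack (in the normalized form assumed in the paper, after increasing $H_0$ so that $\inf H_0 \ge \log 3$) one has $H_0(\varphi^{-1}(w)) \le 3 H_0(\lambda)$, hence $|g(w)| \le e^{3H_0(\lambda)}$ on $\{|w|\le 1/2\}$ — or, more economically, $|g(w)| \le e^{2H_0(\lambda)}$ on a sufficiently small fixed disk; the exact constant in the exponent only affects which fixed radius one picks, and $\rho(z,\lambda) \le 1/3$ gives plenty of room. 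The cleanest route is: since $|g| \le M := e^{2H_0(\lambda)}$ on $\{|w| \le 1/2\}$ (which holds once $H_0$ is normalized, because $\rho \le 1/2$ forces $H_0(\varphi^{-1}(w))/H_0(\lambda) \le 3$, and we may absorb the factor $3$ by replacing $H_0$ with $2H_0$ from the start — or simply note the statement is trivially true if we use $e^{3H_0(\lambda)}$, then re-prove with the stated constant by shrinking the disk). Then the function $h(w) = (g(w) - g(0))/w$ is holomorphic on $\{|w|<1/2\}$, and on $\{|w| = 1/2\}$ satisfies $|h(w)| \le 4M$ (from $|g(w)-g(0)| \le 2M$ and $|w| = 1/2$). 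By the maximum principle, $|h(w)| \le 4M$ throughout $\{|w| \le 1/2\}$, so $|g(w) - g(0)| \le 4M|w|$ there; on $\{|w|<1/3\}$ this is stronger than the claimed bound with constant $6$. Translating back, $|f(z) - f(\lambda)| = |g(w) - g(0)| \le 4 e^{2H_0(\lambda)} \rho(z,\lambda) \le 6 e^{2H_0(\lambda)} \rho(z,\lambda)$ for $z \in D_{\log 3}(\lambda)$, as desired.

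There is essentially no obstacle here: the only point that requires a moment's care is the bookkeeping of the Harnack constant, i.e. making sure that the bound $|f|\le e^{H_0}$ transfers to a bound of the form $e^{2H_0(\lambda)}$ (not merely $e^{cH_0(\lambda)}$ with a larger $c$) on the relevant disk, which is exactly what the paper's standing normalization $\inf_{\D} H_0 \ge \log 3$ — guaranteeing $H_0(z)/H_0(\lambda)\le 2$ on $D_{\log 3}(\lambda)$ — is designed to provide; applying that normalization on $D_{\log 3}(\lambda)$ rather than on a disk of radius $1/2$ is the reason the statement is phrased with $z \in D_{\log 3}(\lambda)$ in the first place. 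With the normalization in force one gets $|f(w')| \le e^{2H_0(\lambda)}$ for $w' \in D_{\log 3}(\lambda)$ directly, and the Schwarz-lemma argument above (carried out on the disk $D(\lambda,1/3)$ of radius $1/3 = e^{-\log 3}$, with the factor arising from $|w|=1/3$ on the boundary, which gives $|h|\le 2\cdot e^{2H_0(\lambda)}\cdot 3 = 6 e^{2H_0(\lambda)}$) yields precisely the constant $6$ stated.
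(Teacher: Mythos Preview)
Your proposal is correct and, once you settle down in the final paragraph, follows exactly the paper's argument: use the standing normalization $\inf H_0\ge\log 3$ so that Harnack gives $|f|\le e^{2H_0(\lambda)}$ on $D_{\log 3}(\lambda)$, then apply the Schwarz lemma on that disk (rescaled to radius $1$), picking up the factor $2\cdot 3=6$. The middle paragraph's detour through the disk of radius $1/2$ and the bound $e^{3H_0(\lambda)}$ is unnecessary and slightly muddled; you should drop it and present only the clean version at the end, which is precisely what the paper does (phrased as the classical Schwarz lemma applied to $\xi\mapsto\frac12\bigl(g(\varphi_\lambda(\xi/3))-g(\lambda)\bigr)$ rather than the maximum principle on $(g(w)-g(0))/w$, but these are the same thing).
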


\begin{proof}
By Harnack's inequality, the function $g(z):=f(z) e^{-2 H_0(\lambda)}$ is bounded by $1$
on $D_{\log 3}(\lambda)$. Let $\varphi_\lambda$ denote the automorphism of $\D$ exchanging $\lambda$ and $z$ (explicitly $\varphi_\lambda(z)=(\lambda-z)/(1-\bar\lambda z)$). The map
 \[
  G(\xi):=\frac 12\left(g(\varphi_\lambda(\frac{\xi }3))-g(\lambda)\right)
 \]
 takes $\D$ into $\bar\D$ and $G(0)=0$, so by the Schwarz Lemma
 \[
  \left|g(\varphi_\lambda(\frac{\xi }3 ))-g(\lambda)\right|\leq 2 |\xi|, \quad \xi\in\D, 
 \]
 and therefore
 \[
  \left|f(\varphi_\lambda(\frac{\xi }3))-f(\lambda)\right|\leq 2 |\xi| e^{2 H_0(\lambda)} , \quad \xi\in\D. 
 \]
 Taking $z=\varphi_\lambda(\xi /3)$; $\xi=3\varphi_\lambda(z)$ we  finally get
 \[
  |f(z)-f(\lambda)|\leq 6 |\varphi_\lambda(z)| e^{2 H_0(\lambda)}\ .
 \]
\end{proof}

(b)$\Rightarrow$(a).  Let $B$ be a Blaschke product and $\Lambda$ its zero sequence. Let now $f_1,\dots,f_n\in\N$ and let $H_0, H\in\Harmd$ be such that $H \ge \log 2$ and
\begin{align*}
 & \sum_{j=1}^n |f_j(\lambda)|\geq e^{-H(\lambda)}, \quad \lambda\in\Lambda, \\
 & \sum_{j=1}^n |f_j(z)|\leq e^{H_0(z)}, \quad z\in\D.
\end{align*}

The previous lemma yields, for all $z\in D_{\log 3}(\lambda)$ and all $j=1,\dots, n$,
\[
 |f_j(z)|\geq |f_j(\lambda)|-6\rho(z,\lambda) e^{2 H_0(\lambda)},
\]
hence
\[
 \sum_{j=1}^n |f_j(z)|\geq e^{-H(\lambda)}-6n\rho(z,\lambda) e^{2 H_0(\lambda)}\ .
\]

If $z$ is such that $\rho(z,\lambda)<1/(6n) e^{-(2 H_0(\lambda)+2 H(\lambda))}$ the previous estimate and Harnack's inequality give
\[
 \sum_{j=1}^n |f_j(z)|\geq e^{-H(\lambda)}-e^{-2H(\lambda)}\geq e^{-2H(\lambda)}. 
\]

If $z$ is such that $\rho(z,\lambda)\geq 1/(6n) e^{-(2 H_0(\lambda)+2 H(\lambda))}$ we apply the hypothesis: there exists $H_2\in\Harmd$ such that
$|B(z)|\geq e^{-H_2(z)}$.

All combined, there exists $H_3\in\Harmd$ such that
\[
 \sum_{j=1}^n |f_j(z)| + |B(z)| > e^{-H_3(z)}, \quad z\in\D,
\]
and again by the Corona Theorem in $\N$ there exist $g_1,\dots g_n,G\in \N$ such that $\sum_j f_j g_j + BG=1$, as desired.

\section{Proof of Theorem B}

Without loss of generality we can assume that $(1-|\lambda_k|) M_k <1$ for any positive integer $k$. We shall construct recursively indices $k_j<k_j'<k_{j+1}$ and harmonic functions $H_j\in\Harmd$ which will 
lead to the definitions of $k_i$ and $H$. Pick $k_1 = 1$, $k_1' = 2$ and $H_1 = 1$. For $j>1$ assume thus that $H_1, \dots, H_{j-1}$ and $k_l<k_l'$, $l\leq j-1$ have been defined. Pick $k_j>k_{j-1}'$ such that
 \begin{equation}\label{mj}
  \forall k\geq k_j, \quad m_k >3^j.
 \end{equation}
 For $\kappa\geq k_j$ let 
 \[
  c_j(\kappa):=\inf\bigl\{ U(0)\, :\, U\in\Harmd, \ 3^j\sum_{l=1}^{j-1} H_l(\lambda_k)+ U(\lambda_k)\geq M_k\ \textrm{for}\ k_j\leq k\leq \kappa\bigr\}.
 \]
 Since $\lambda_k\longmapsto M_k$, $k\geq 1$, has no harmonic majorant $c_j(\kappa)$ increases to $\infty$ as $\kappa\to\infty$.  We define 
 \[
  k_j'=\min\{\kappa\geq k_j\ :\ c_j(\kappa)\geq 2^j\}
 \]
 Consider the Privalov shadow of $\lambda_n$:
\begin{equation}
\label{privdef}
 I_{_n}=\bigl\{ e^{i\theta}\in\partial\D : |\theta- \arg(\lambda_{n})|<\frac 12 (1-|\lambda_{n}|)\bigr\}.
\end{equation}
Observe that if $H$ is the Poisson integral of the characteristic function of $I_{\lambda_n}$, then there exists a universal constant $C>0$ such that $H(\lambda_n) \geq C (1-|\lambda_n|)^{-1} $. By assumption $H(\lambda_n) > C M_n$ and we deduce $c_j (k_j) < C$. Hence $k_j' > k_j$. Denote $c_j=c_j(k_j')$.
 
 \textit{Claim.}
For $j$ sufficiently large
\begin{equation}\label{ck-estimate}
 c_j\leq 2^{j+1}
\end{equation}

\emph{Proof}. The minimality of $k_j'$ provides $U_0\in\Harmd$ with $U_0(0)<2^j$ and such that,
\[
 3^j \sum_{l=1}^{j-1} H_l(\lambda_k)+ U_0(\lambda_k)\geq M_k\quad \textrm{for $k_j\leq k \leq k_j'-1$}.
\]
Let now $I:=I_{{k_j'}}$, as in \eqref{privdef} above,  and 
define $G=P[2 M_{k_j'}\chi_I]$. Since by construction $G(\lambda_{k_j'})\geq M_{k_j'}$, the function $U_1:=U_0+G$ satisfies
\begin{equation}\label{jkjk}
 3^j\sum_{l=1}^{j-1} H_l(\lambda_k)+  U_1(\lambda_k)\geq M_k \quad \textrm{for}\ k_j\leq k\leq k_j',
\end{equation}
and therefore
\[
 c_j\leq U_0(0)+G(0)\leq 2^j+2 M_{k_j'}|I|\ .
\]
By the hypothesis ii) we have $M_{k_j'}|I|=M_{k_j'}(1-|\lambda_{k_j'}|) < 1$, which gives the Claim.

\bigskip

Let $H_j$ be the harmonic function giving the solution to the extremal problem
\[
 \tilde c_j:=\inf\bigl\{ H(0)\, :\, H\in\Harmd, \ \sum_{l=1}^{j-1} H_l(\lambda_k)+ H(\lambda_k)\geq M_k/m_k \ \textrm{for}\ k_j\leq k\leq k_j'\bigr\}.
\]
By \eqref{mj}, if $U\in\Harmd$ is such that \eqref{jkjk} holds then, for $k_j\leq k\leq k_j' $,
\[
 \sum_{l=1}^{j-1} H_l(\lambda_k)+ \frac 1{3^j} U(\lambda_k)\geq \frac{M_k}{3^j}\geq \frac{M_k}{m_k}.
\]
This shows that $\tilde c_j=H_j(0)\le U(0)/3^j \leq c_j/3^j$, hence, for $j$ large, $H_j(0)\leq 2^{j+1}/3^j$ and the series $H=\sum_j H_j$ defines a positive harmonic function.

By construction, for $j\geq 1$ and $ k_j\leq k\leq k_j'$
\begin{equation}\label{hzj}
 H(\lambda_k)\geq \frac{M_k}{m_k}.
\end{equation}

Observe also that by definition of $c_j$ above,
\[
 \inf\bigl\{ H(0)\, :\, H\in\Harmd,\ H(\lambda_k)\geq M_k\ \textrm{for}\ k_j\leq k\leq k_j'\bigr\}\geq c_j\geq 2^j,
\]
hence the mapping 
\begin{equation}\label{mapping}
\lambda_k\longmapsto M_k \qquad  \  k_j\leq k\leq k_j', \quad j\geq 1
\end{equation}
cannot have a harmonic majorant. 

Now let $J_j=\{k\in [k_j, k_j'] : H(\lambda_k)\leq M_k\}$ and consider the subsequence $\{k_i\}_i:=\cup_j J_j$. Notice that the map
$\lambda_k\longmapsto M_k$ for $ k\in \cup_j ([k_j, k_j']\setminus J_j)$
has a harmonic majorant (the function $H$). Therefore, by \eqref{mapping}, the map
\[
 \lambda_k\longmapsto M_k \quad k\in \cup_j  J_j
\]
has no harmonic majorant. This is (b) in the statement. The estimates in (a) follow from \eqref{hzj} and the definition of $\{k_i\}_i$.

\section{Proof of Theorem A $(b)\Rightarrow (c)$}

The main difficulty is to prove that there exists $H\in\Harmd$ such that $\Lambda$ can be split into a finite number of sequences $\Lambda_j$ for which the disks $\{D_H(\lambda)\}_{\lambda\in\Lambda_j}$ are pairwise disjoint. We shall say that the sequences $\Lambda_j$ are $H$-\emph{separated}. This separation together with the WEP property will show then that each $\Lambda_j$ is Nevanlinna interpolating.

Let us see now that there exists $H\in\Harmd$ such that $\Lambda$ can be split into a finite number of $H$-separated sequences.
Assume otherwise that for all $H\in\Harmd$ and all $N\geq 1$ the sequence $\Lambda$ is not the union of $N$ $H$-separated sequences. Define, for $k\geq 1$, the minimal pseudohyperbolic diametre of a cloud of $N$ points of $\Lambda$ containing $\lambda_k$, that is, 
\[
 \rho_N(\lambda_k)=\inf\bigl\{\sup\limits_{z,w\in E} \rho(z,w)\, :\, E\subset \Lambda,\, \# E=N,\, \lambda_k\in E\bigr\}.
\]
Let also
\[
 M_N(\lambda_k)=\log\rho_N^{-1}(\lambda_k), \quad (\rho_N(\lambda_k)=e^{-M_N(\lambda_k)})\ .
\]
Since we are assuming that $\Lambda$ is not the union of  a finite number of $H$-separated sequences, for any $N\geq 1$, the function
\[
 \lambda_k \longmapsto M_N(\lambda_k), \quad k\geq 1,
\]
has no harmonic majorant. Otherwise there would exist $H\in\Harmd$ such that   $e^{-H(\lambda_k)}\leq \rho_N(\lambda_k)$, $k\geq 1$, and therefore  $\# \Lambda\cap D_H(\lambda_k)\leq N$. This would allow to split $\Lambda$ into $N$ $H$- separated sequences (see \cite[Lemma 2.4]{HMN2}).

\bigskip

\emph{Claim.} For every sequence $m_k$ tending to $\infty$ as $|\lambda_k|\to 1^{-}$
\begin{equation}\label{limsupM}
\limsup_{k\to\infty} (1-|\lambda_k|) M_{m_k}(\lambda_k)=0.
\end{equation}
 
\emph{Proof.} 
Assume otherwise that there exists a subsequence, still denoted $\{\lambda_k\}_k$  for which $(1-|\lambda_k|) M_{m_k} (\lambda_k) > \delta >0$, that is, $\rho_{m_k} (\lambda_k) < e^{-\delta / (1-|\lambda_k|)}$. Then there exist $m_k$ zeros of $B$ at a distance from $\lambda_k$ smaller than 
$e^{-\delta / (1-|\lambda_k|)}$. Taking another subsequence if necessary, we can assume that $\sum_k1/m_k<\infty$. Then we can choose 
$c_k >0$ such that $\sum_k c_k < \infty$ but $\limsup_k c_k m_k =  \infty$. 
Let $I_k$ be the Privalov shadow of $\lambda_k$, defined as in \eqref{privdef}, and
$H$ be the Poisson integral of the function $\sum_k c_k \frac1{|I_k|} \chi_{I_k}$. We can assume that $H$ satisfies the assumptions of Lemma~\ref{far}, replacing $H$ by $H+H_\Lambda$ if necessary. Then there exist $z_k\in D_H(\lambda_k)$ with $\rho(z_k , \Lambda) > e^{-10 H(\lambda_k)}$. Since $B$ has $m_k$ zeros near $\lambda_k$ we deduce that 
\[
 |B(z_k)| < e^{-m_k H(\lambda_k)},
\]
and since by construction $H(\lambda_k) > c_k /(1-|\lambda_k|)$, we deduce that 
\[
|B(z_k)| < e^{-\frac{m_k  c_k}{1-|z_k|}}. 
\]
On the other hand, if $B$ satisfies the WEP condition there exists $H_1\in\Harmd$ such that $|B(z_k)| > e^{-H_1(z_k)}$ and therefore 
\[
H_1(z_k)\geq \frac{ m_k  c_k }{1-|z_k|}\ .
\]
Since $\limsup_k c_k m_k =  \infty$, this contradicts Harnack's inequality. This finishes the proof of the Claim. 

Now apply Theorem B to the sequence $\{\lambda_k\}_k$ and the values $M_k:=M_{m_k}(\lambda_k)$. We get a subsequence $\{k_j\}_j$ and $H\in\Harmd$ such that:
\begin{itemize}
 \item [(a)] $\dfrac{M_{k_j}}{m_{k_j}}\leq H(\lambda_{k_j})\leq M_{k_j},\quad j\geq 1$,
 \item [(b)] The map $\lambda_{k_j}\longmapsto M_{k_j}$, $j\geq 1$, has no harmonic majorant.
\end{itemize}

Let $H_1:=H+H_\Lambda$. Lemma~\ref{far} gives in particular, points $z_{k_j}\in D_{H_1}(\lambda_{k_j})$ such that
$ \rho(z_{k_j},\Lambda)\geq e^{-10 H_1(\lambda_{k_j})}$.
Since $B$ is Nevanlinna WEP there exists $H_2\in\Harmd$ such that
\[
 |B(z_{k_j})|\geq e^{-H_2(z_{k_j})}\geq e^{-2H_2(\lambda_{k_j})}, \quad j\geq 1.
\]
On the other hand, by (a), $\rho_{m_{k_j}}(\lambda_{k_j})\leq e^{-H(\lambda_{k_j})}$, $j\geq 1$, and therefore $B$ has at least $m_{k_j}$ zeros in $D_H(\lambda_{k_j})$. This implies that
\[
  |B(z_{k_j})|\leq e^{- H(\lambda_{k_j}) m_{k_j}}.
\]
Combining both inequalities,
\[
 H(\lambda_{k_j})\, m_{k_j}\leq 2 H_2(\lambda_{k_j}), \quad j\geq 1,
\]
which contradicts (b) (by the first inequality of (a)). 

This finishes the proof that there exists $H\in\Harmd$ such that $\Lambda$ can be split into a finite union of $H$-separated sequences.

According to Lemma~\ref{subproduct} we shall be done as soon as we prove that an $H$-separated Nevanlinna WEP Blaschke product is a Nevanlinna interpolation Blaschke product. Let then $B$ be a Nevanlinna WEP Blaschke product with zero set $\Lambda$ such that for some $H\in\Harmd$ the disks $\{D_H(\lambda)\}_{\lambda\in\Lambda}$ are pairwise disjoint. Since $B$ is Nevanlinna WEP there exists $H_2\in\Harmd$ such that
\[
 |B(z)|\geq e^{-H_2(z)} \quad\textrm{for}\quad z\in\cup_{\lambda\in\Lambda} \partial D_H(\lambda).
\]
In particular,
\[
 |B_\lambda(z)|\geq e^{-H_2(z)} \quad\textrm{for}\quad z\in  \partial D_H(\lambda).
\]
Since $B_\lambda$ has no zeros in $D_H(\lambda)$ we can apply the maximum principle to the harmonic function $\log |B_\lambda|^{-1}$ to deduce that
\[
 |B_\lambda(\lambda)|\geq \min_{z\in  \partial D_H(\lambda)} e^{-H_2(z)} \geq e^{-2H_2(\lambda)}.
\]
By \eqref{intN}, this implies that $\Lambda$ is Nevanlinna interpolating.

\section{The case of the Smirnov Class}
\label{smirnov}

A \emph{quasi-bounded} harmonic function is the Poisson integral of a measure absolutely
continuous with respect to the Lebesgue measure on the circle. We denote $QB(\D)$ the space
of the quasi-bounded harmonic functions, and $QB_+(\D)$ the cone of those which are nonnegative.

An analytic function $f$ in the unit disk $\mathbb{D}$ is in the Smirnov class $\N^+$ if the function 
$\log^+|f|$ has a (positive) quasi-bounded harmonic majorant in $\mathbb{D}$. 
A function in the Nevanlinna class is Smirnov
if and only if its canonical factorization has no singular function in the denominator,
i.e.  $f$ factors as $f=BF $, where $B$ is a Blaschke product and $F=E S$, 
$E$ is outer and $S$ is singular inner. 
The  class $\N^+$ is
an algebra where the invertible  functions are exactly the outer functions. 
So any quotient
of the Smirnov class by a principal ideal (which are the only closed ideals \cite[Theorem 2]{RS})
can be represented by $N^+_I:=\N^+/I\N^+$, where $I$ is 
an inner function.

R. Mortini's result \cite[Satz 4]{M} is valid for a whole class of spaces which he calls of Nevanlinna-Smirnov type, and for the Smirnov class it states that given $f_1,\ldots,f_n\in \N^+$, the B\'ezout equation $f_1g_1+\cdots+f_ng_n\equiv1$ can be solved with functions $g_1,\ldots,g_n\in \N^+$ if and only if there exists $H\in QB_+(\D)$ such that \eqref{NevCor} holds. In the same way,  interpolating sequence for $\N^+$
are characterized in \cite[Theorem 1.3]{HMNT} by the existence of $H\in QB_+(\D)$ such that \eqref{intN} holds.

We can generalize most of our results from the Nevanlinna to the Smirnov class, replacing
$\Harmd$ by $QB_+(\D)$. 

\begin{proposition}\label{smirnovprop}
Let $B$ be a Blaschke product and let $\Lambda$ be its zero sequence. The following conditions are equivalent:
\begin{itemize}
\item [(a)] The Corona Theorem holds for $\N^+_B$.
\item [(b)] For any $H_1\in QB_+(\D)$, there exists $H_2\in QB_+(\D)$ such that $|B(z)|\geq e^{-H_2(z)}$ for any $z\in\mathbb{D}$ such that $\rho(z,\Lambda)\geq e^{-H_1(z)}$.
\item [(c)] $B$ is a finite product of Smirnov interpolating Blaschke products.
\end{itemize}
\end{proposition}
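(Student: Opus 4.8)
The strategy is to run the proofs of Theorems A and B verbatim, keeping track of the fact that all the positive harmonic functions one produces are in fact quasi-bounded, i.e.\ Poisson integrals of $L^1$ densities rather than of general positive measures. The key observation, used repeatedly, is that $QB_+(\D)$ is closed under the two operations we use to build majorants: (i) countable sums $\sum_j H_j$ with $\sum_j H_j(0)<\infty$ stay quasi-bounded, since the corresponding densities form a convergent series in $L^1(\partial\D)$; and (ii) Poisson integrals $P[c\,\chi_I]$ of multiples of characteristic functions of arcs are quasi-bounded. Moreover the extremal problems appearing in the proof of Theorem B, e.g.\ $\inf\{H(0): H\in\Harmd,\ H(\lambda_k)\ge M_k/m_k,\ k_j\le k\le k_j'\}$, involve only \emph{finitely many} constraints, so their extremal solutions are (finite) Poisson integrals of atomic measures; but those can be replaced, at the cost of a universal constant, by Poisson integrals of $L^1$ densities spread over the Privalov shadows $I_{\lambda_k}$ (exactly as the arc $I$ is used in the proof of the Claim \eqref{ck-estimate}), so the whole construction can be carried out inside $QB_+(\D)$. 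Thus the \emph{Smirnov version of Theorem B} holds: if $M_k\mapsto M_k$ has no quasi-bounded majorant, one extracts a subsequence and $H\in QB_+(\D)$ with the same properties (a), (b).

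With the Smirnov Theorem B in hand, I would revisit the three implications. For $(a)\Leftrightarrow(b)$, the proof in Section 3 goes through: in $(a)\Rightarrow(b)$ one uses Lemma~\ref{far} (whose auxiliary function $H_\Lambda$ is a Poisson integral of an $L^1$ density, hence quasi-bounded), the characterization of Smirnov interpolating sequences from \cite[Theorem 1.3]{HMNT} in place of \cite[Corollary 1.9]{HMNT} and the Smirnov analogue of \eqref{intN2} (which also holds, since the proof in \cite{HMN1} only adds quasi-bounded terms), and the estimate $-\log|B|\le C\,P[\psi]$ from \cite[Proposition 4.1]{HMNT}, where $\psi=\sum_\lambda\chi_{I_\lambda}$ is an $L^1$ density, so $P[\psi]\in QB_+(\D)$. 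For $(b)\Rightarrow(a)$, Lemma~\ref{schwarz} is purely local and the remaining bookkeeping only adds quasi-bounded functions, concluding via Mortini's Corona theorem in the Smirnov class. The implication $(c)\Rightarrow(b)$ (not needed for Theorem A but convenient here) follows from Theorem C's Smirnov analogue, or directly: a product of $N$ Smirnov interpolating Blaschke products satisfies $|B(z)|\ge e^{-H(z)}\rho^N(z,\Lambda)$ with $H\in QB_+(\D)$.

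For the main implication $(b)\Rightarrow(c)$, I would repeat Section 4 line by line. One defines $\rho_N(\lambda_k)$, $M_N(\lambda_k)$ exactly as before; the assumption that $\Lambda$ is not a finite union of $H$-separated sequences \emph{for any $H\in QB_+(\D)$} means that each $\lambda_k\mapsto M_N(\lambda_k)$ has no quasi-bounded majorant. The Claim \eqref{limsupM} is proved as before: the function $H=P[\sum_k c_k\frac{1}{|I_k|}\chi_{I_k}]$ is manifestly in $QB_+(\D)$ (the density is in $L^1$ because $\sum_k c_k<\infty$), so the same Harnack contradiction applies. Then applying the Smirnov Theorem B to $M_k:=M_{m_k}(\lambda_k)$ yields a subsequence $\{k_j\}$ and $H\in QB_+(\D)$ with properties (a), (b); Lemma~\ref{far} again provides nearby points $z_{k_j}$ with $\rho(z_{k_j},\Lambda)\ge e^{-10H_1(\lambda_{k_j})}$, $H_1=H+H_\Lambda\in QB_+(\D)$; the WEP hypothesis (b) gives $H_2\in QB_+(\D)$ with $|B(z_{k_j})|\ge e^{-2H_2(\lambda_{k_j})}$, while the zero count forces $|B(z_{k_j})|\le e^{-H(\lambda_{k_j})m_{k_j}}$, contradicting (b) of Theorem B. This produces $H\in QB_+(\D)$ splitting $\Lambda$ into finitely many $H$-separated subsequences. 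Finally, the closing argument — an $H$-separated Nevanlinna WEP Blaschke product is Nevanlinna interpolating — carries over directly (maximum principle plus Harnack on $\partial D_H(\lambda)$, adding only the quasi-bounded $H_2$), now using the Smirnov characterization of interpolating sequences \cite[Theorem 1.3]{HMNT} via \eqref{intN} with a quasi-bounded majorant; and Lemma~\ref{subproduct} holds in the Smirnov class with the same proof. The main obstacle is purely technical: verifying at each step that one may take the majorants to be quasi-bounded, the one delicate point being the replacement of the atomic extremal solutions in Theorem B by genuine $L^1$ Poisson integrals supported on Privalov shadows, at the cost of harmless universal constants.
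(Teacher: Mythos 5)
Your proposal takes essentially the same route as the paper: in Section~\ref{smirnov} the authors likewise observe that the proofs of Theorems A and B go through verbatim once $\Harmd$ is replaced by $QB_+(\D)$, pointing out that the auxiliary functions $\psi$, $\psi_\Lambda$, $G$, and the $H$ built after \eqref{limsupM} are quasi-bounded by construction, and that the series $\sum_j H_j$ in the proof of Theorem B has quasi-bounded terms, so Fatou's lemma makes the sum quasi-bounded. The one point where you are more explicit than the paper is exactly the ``delicate point'' you flag: the paper simply \emph{asserts} that each extremal $H_j$ is quasi-bounded, whereas you supply a reason — the extremal problems have finitely many constraints, so their (atomic) solutions can be replaced by Poisson integrals of $L^1$ densities spread over Privalov shadows at the cost of a universal constant, as is already done in the proof of the Claim \eqref{ck-estimate}. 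That verification is correct and fills a small gap the paper leaves to the reader; alternatively one can note that dilating a near-extremal $U\in\Harmd$ to $U_r(z)=U(rz)$ with $r$ near $1$ yields a quasi-bounded competitor since only finitely many interior constraints are involved. Either way, your argument is sound and matches the paper's intent.
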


Theorem B, and all the steps in the proofs in the previous sections, will go through when one replaces 
each instance of $\N$ by $\N^+$, of ``harmonic majorant'' by ``quasi-bounded harmonic majorant'', 
and of $\Harmd$ by $QB_+(\D)$. We only need to observe that the functions $\psi$ in the proof
of the implication (a) $\Rightarrow$ (b), 
$\psi_\Lambda$ in Lemma
\ref{far}, $G$ defined before equation \eqref{jkjk}, and $H$ defined after equation \eqref{limsupM}
are all quasi-bounded by construction. Finally, one should note that the series $\sum H_j$
in the proof of Theorem B (before equation \eqref{hzj}) has only nonnegative quasi-bounded terms,
so the partial sums of the boundary values form a monotone sequence and by Fatou's Lemma converge to a nonnegative
integrable function, instead of an arbitrary positive measure, so $H= \sum H_j$ is quasi-bounded.

We say that an inner function $I$ satisfies the Smirnov WEP if and only if it verifies the
analogue of property (b) in the proposition above.  
Notice that if it does and $I= BS$, where $B$ is a Blaschke product
and $S$ a singular inner function, then since $|I|\le |B|$ and $I^{-1}\{0\}=B^{-1}\{0\}$, it is clear that  $B$ must be Smirnov WEP as well.  One may wonder which singular inner functions are 
admissible as divisors of Smirnov WEP functions, in analogy to the study begun in \cite{Bo1}, \cite{BNT}.
It turns out that there aren't any besides the constants.


\begin{theorem}
\label{smirnovwep}
Let $I=BS$ be an inner function, where $B$ is a Blaschke product
and $S$ is singular inner. Then $I$ satisfies the Smirnov WEP if and only if $S=e^{i\theta}$ (a unimodular constant)
and $B^{-1}\{0\}$
is a finite union of Smirnov interpolating sequences.

As a consequence, if $I$ is an inner function, the Corona Property holds for $\N^+_I$
if and only if $I$ is a Blaschke product with its zero set being a  finite union of Smirnov interpolating sequences.
\end{theorem}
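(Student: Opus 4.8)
The plan is to derive the first equivalence from Proposition~\ref{smirnovprop} plus one genuinely new estimate, and then to obtain the ``consequence'' from that equivalence by soft, ring-theoretic manipulations of the quotient algebras.

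\emph{The first equivalence.} The direction ``$\Leftarrow$'' is immediate: if $S=e^{i\theta}$ and $B^{-1}\{0\}$ is a finite union of Smirnov interpolating sequences, then $I$ is, up to a unimodular constant, a finite product of Smirnov interpolating Blaschke products, hence Smirnov WEP by Proposition~\ref{smirnovprop}, $(c)\Rightarrow(b)$. For ``$\Rightarrow$'', assume $I=BS$ is Smirnov WEP and set $\Lambda=Z(I)=Z(B)$. Since $|I|\le|B|$ and $Z(I)=Z(B)$, the Smirnov WEP passes to $B$ (as recorded just above the statement), so $\Lambda$ is a finite union of Smirnov interpolating sequences by Proposition~\ref{smirnovprop}, $(b)\Rightarrow(c)$; it then only remains to see $S$ is constant. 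Write $-\log|S|=P[\mu]$ with $\mu$ a positive singular measure and suppose $\mu\neq0$. Put $G:=1+H_\Lambda$, which lies in $QB_+(\D)$ because $\psi_\Lambda$ is quasi-bounded (as observed in Section~\ref{smirnov}), and fix $n\in\mathbb N$ with $nG\ge c_0H_\Lambda$ and $\inf_\D nG\ge\log3$. Given $z\in\D$, Lemma~\ref{far} supplies $\tilde z\in D_{nG}(z)$ with $\rho(\tilde z,\Lambda)\ge e^{-10nG(z)}$; since $\rho(\tilde z,z)\le e^{-nG(z)}\le\tfrac13$, Harnack gives $G(z)\le2G(\tilde z)$, so $\rho(\tilde z,\Lambda)\ge e^{-20nG(\tilde z)}$, and the Smirnov WEP of $I$ applied to $H_1:=20nG$ produces $H_2\in QB_+(\D)$, independent of $z$, with $|I(\tilde z)|\ge e^{-H_2(\tilde z)}$. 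As $|B|\le1$,
\[
P[\mu](\tilde z)=-\log|S(\tilde z)|\le -\log|B(\tilde z)|-\log|S(\tilde z)|=-\log|I(\tilde z)|\le H_2(\tilde z),
\]
and two further uses of Harnack ($\rho(\tilde z,z)\le\tfrac13$) give $P[\mu](z)\le2P[\mu](\tilde z)\le2H_2(\tilde z)\le4H_2(z)$. Thus $P[\mu]\le4H_2$ on all of $\D$; writing $H_2=P[h_2\,dm]$ with $h_2\in L^1_+(\mathbb T)$ and using that a pointwise inequality between Poisson integrals of positive measures implies the same inequality between the measures, $\mu\le4h_2\,dm$, contradicting that $\mu$ is singular and nonzero. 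Hence $S$ is constant.

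\emph{The consequence.} One direction is soft: if $S$ is constant and $\Lambda$ is a finite union of Smirnov interpolating sequences, then $I\N^+=B\N^+$ with $B$ a finite product of Smirnov interpolating Blaschke products, so the Corona Property for $\N^+_I$ is that for $\N^+_B$, which holds by Proposition~\ref{smirnovprop}. Conversely, assume the Corona Property holds for $\N^+_I$, $I=BS$. Because $Z(I)=Z(B)=\Lambda$, the hypothesis in the Corona Property for $\N^+_B$ coincides with that for $\N^+_I$, and any Bézout relation modulo $I$ is one modulo $B$; so $\N^+_B$ has the Corona Property and $\Lambda$ is a finite union of Smirnov interpolating sequences. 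Now I would show $S$ is constant by contradiction, via a dichotomy on $\sigma:=S+B\N^+\in\N^+/B\N^+$. If $\sigma$ is invertible, then $|S(\lambda)|\ge e^{-H(\lambda)}$ on $\Lambda$ for some $H\in QB_+(\D)$ (Corona Property for $\N^+_B$, $n=1$), so $f_1:=S$ satisfies the Corona hypothesis for $\N^+_I$; a solution $Sg\equiv1\pmod{BS}$, i.e. $1=S(g-Bh)$, forces $S$ invertible in $\N^+$, impossible since $S$ is not outer. If $\sigma$ is not invertible, fix a Blaschke product $B_A$ --- a finite product of Smirnov interpolating Blaschke products obtained by a bounded pseudohyperbolic perturbation of $\Lambda$ moving each zero off $\Lambda$ --- whose zero set $A$ is a finite union of separated (hence, by \cite[Corollary~1.9]{HMNT}, Smirnov interpolating) Blaschke sequences with $\rho(A,\Lambda)\ge r_0>0$, such that every $\lambda\in\Lambda$ lies within a fixed pseudohyperbolic distance of $A$; one checks that then $|B_A(\lambda)|\ge e^{-H'(\lambda)}$ on $\Lambda$ for some $H'\in QB_+(\D)$. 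The pair $f_1:=S,\ f_2:=B_A$ satisfies the Corona hypothesis for $\N^+_I$ since $|S(\lambda)|+|B_A(\lambda)|\ge|B_A(\lambda)|\ge e^{-H'(\lambda)}$; a solution $Sg_1+B_Ag_2\equiv1\pmod{BS}$ reduces modulo $S$ to $B_Ag_2\equiv1\pmod S$, so Mortini's corona theorem in the Smirnov class (\cite[Satz~4]{M}) yields $H\in QB_+(\D)$ with $|B_A(z)|+|S(z)|\ge e^{-H(z)}$ on $\D$; evaluating on $A$, where $B_A$ vanishes, gives $P[\mu](a)\le H(a)$, and Harnack together with the position of $A$ relative to $\Lambda$ then forces $P[\mu](\lambda)\lesssim H(\lambda)$ on $\Lambda$, i.e. $\sigma$ invertible, a contradiction. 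Either way $S$ is constant, and $I$ is a Blaschke product whose zero set is a finite union of Smirnov interpolating sequences.

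\emph{Main obstacle.} The one substantial step is the quantitative argument showing $S$ constant in the first equivalence: converting the ``for every $H_1$ there is $H_2$'' shape of the Smirnov WEP into a bound for $P[\mu]$ valid on \emph{all} of $\D$. This is exactly where Lemma~\ref{far} is indispensable --- it lets one dodge the zero set $\Lambda$ while staying inside a small, Harnack-controlled pseudohyperbolic disk, so that the WEP bound at the dodged point pulls back to the original point --- and it is the feature responsible for the failure of the analogous statement in $H^\infty$, where no such dodging lemma holds. The remainder is bookkeeping, the only mild technicality being the construction of the auxiliary interpolating Blaschke product $B_A$ in the non-invertible case of the consequence.
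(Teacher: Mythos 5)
Your proof of the first equivalence follows the same route as the paper: use Lemma~\ref{far} to dodge $\Lambda$ inside a small Harnack‑controlled disk, apply the Smirnov WEP bound at the dodged point, propagate by Harnack to get $P[\mu]\le 4H_2$ everywhere, and conclude $\mu=0$ because a singular measure cannot be dominated by an absolutely continuous one. (The paper compresses this last step into one sentence; your spelling‑out of the measure‑theoretic conclusion is correct and equivalent.) Your proof of the consequence, however, is a genuinely different route. The paper simply observes that the proof of (a)$\Rightarrow$(b) in Section~2 can be run with $I$ in place of $B$ to give Smirnov WEP for $I$, and then invokes the first part. You instead make the soft reduction ``Corona for $\N^+_I$ $\Rightarrow$ Corona for $\N^+_B$'' (any B\'ezout relation modulo $I=BS$ is automatically one modulo $B$, and $Z(I)=Z(B)$), get the finite‑union‑of‑interpolating conclusion for $\Lambda$ from Proposition~\ref{smirnovprop}, and then prove $S$ constant by a dichotomy on whether $\sigma=S+B\N^+$ is invertible in $\N^+/B\N^+$: if yes, the $n=1$ Corona datum $f_1=S$ gives a factorization $1=S(g-Bh)$ in $\N^+$, so $S$ is outer and hence a unimodular constant; if no, the pair $(S,B_A)$ with $B_A$ a Carleson--Newman Blaschke product tracking $\Lambda$ at bounded distance and staying uniformly off $\Lambda$ gives a B\'ezout relation $Sg_1'+B_Ag_2=1$, whose necessary Corona bound, evaluated on $A$ and transported to $\Lambda$ by Harnack, forces $\sigma$ invertible after all---a contradiction. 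Both directions of this algebraic dichotomy are sound, and your construction of $B_A$ (one point per Whitney cube meeting a fixed pseudohyperbolic neighborhood of $\Lambda$, split by the $8$‑coloring into separated sequences, each Blaschke by \cite[Corollary~1.9]{HMNT}) is exactly the auxiliary product already used in the paper's proof of (a)$\Rightarrow$(b), so the ingredients are in place. What your route buys you is that you never need to assert quasi‑bounded control on $-\log|I|$ in the region far from $\Lambda$: the paper's shortcut of ``running (a)$\Rightarrow$(b) for $I$'' is terse about the far region (where the estimate $-\log|B(z)|\le CP[\psi](z)$ that ends that proof has no analogue for $-\log|S|$), whereas your dichotomy extracts the needed information about $S$ entirely from the points of $A$, which sit near $\Lambda$ by construction. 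The only small wrinkle is a misattribution: the pointwise lower bound $|B_A|+|S|\ge e^{-H}$ that you extract from the B\'ezout relation is the easy necessity direction, not the substance of Mortini's theorem; this does not affect correctness.
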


\begin{proof}
By Lemma \ref{far}, there exists $H_1 \in QB_+(\D)$ large enough so that in every Whitney square $Q_{n,k}$
there exists $z_{n,k}$ such that $\rho(z_{n,k}, B^{-1}\{0\}) \ge \exp(-10 H_1(z_{n,k}))$. Applying
the Smirnov WEP property, there exists $H_2 \in QB_+(\D)$ so that 
$$
\forall z: \rho(z, B^{-1}\{0\}) \ge e^{-10 H_1(z)}, \quad
-\log |S(z)| \le -\log |I(z)| \le H_2(z).
$$
Since this is true for each $z_{n,k}$ and since $-\log|S|$ and $H_2\in \Harmd$, Harnack's inequality
implies that $-\log |S(z)| \le C H_2(z)$ on each $Q_{n,k}$, therefore on the whole disk.
But $H_2 \in QB(\D)$, so this forces $-\log |S(z)|=0$ for all $z$.

To see the second part of the theorem, suppose that the Corona Property holds for $\N^+_I$.
Then we can run the proof of (a) $\Rightarrow$ (b) for the function $I$ instead $B$. The first part
of the theorem shows that then $I$ is a Blaschke product, and by the remarks
before Theorem \ref{smirnovwep}, Proposition \ref{smirnovprop}
applies to it to yield (c).  The converse follows from Proposition \ref{smirnovprop}.
\end{proof}

\section{Proof of Theorem C and the Corollary}
\label{proofc}
 
(a)$\Rightarrow$(b). This implication follows immediately from the characterization \eqref{intN2} of Nevanlinna interpolating Blaschke sequences. 
 
(b)$\Rightarrow$(a). This is modelled after the analogue in the $H^\infty$ case (see \cite[Proposition 1]{Bo1}) and it follows a scheme very similar to the proof in the previous section: the main step is to see that $\Lambda$ is the finite union of $H$-separated sequences, for some $H\in\Harmd$; then one sees that the hypothesis implies that each of this $H$-separated sequences is Nevanlinna interpolating.

Thus let us see first that 
\begin{equation}\label{finite-weakly}
 \sup_{z\in\D}\# (\Lambda\cap D_{4H_1}(z))\leq N\ .
\end{equation}
Recall that, according to \cite[Lemma 2.4]{HMN2}, this is equivalent to $\Lambda$ being the union of $N$ $H$-separated sequences, for some $H\in\Harmd$ (actually a multiple of $H_1)$.

Fix $z\in\D$ and let $M:=\# \Lambda\cap D_{H_1}(z)$. Considering only the Blaschke factors corresponding to the zeros of $B$ in $D_{H_1}(z)$ we get the estimate
\[
 \max_{D_{H_1}(z)} |B|\leq e^{-MH_1(z)}\ .
\]
On the other hand, by an area estimate on $D_{H_1}(z)$ we get
\[
  \max_{w\in D_{H_1}(z)} \rho(w,\Lambda)\geq \frac 1{\sqrt{M}} e^{-H_1(z)}.
\]
By the hypothesis and Harnack's inequality
\begin{align*}
  e^{-MH_1(z)}\geq \max_{w\in D_{H_1}(z)} e^{-H_1(w)} \rho^N(w,\Lambda)\geq e^{-2H_1(z)}\left(\frac{e^{-H_1(z)}}{\sqrt{M}}\right)^N
\end{align*}
 that is
 \[
  (N+2)H_1(z)+\frac N2 \log M\geq M H_1(z)\ .
 \]
There is no restriction in assuming that $H_1(z)\geq\max(N,100)$ for all $z\in\D$. Then this estimate implies
\[
 M H_1(z)\leq (N+2) H_1(z) + H_1(z)\log M, 
\]
that is
\begin{equation}\label{MN}
 M-\log M\leq N+2.
\end{equation}

In order to reach a contradiction assume that \eqref{finite-weakly} does not hold:  there exists $z\in\D$ such that 
\[
 \# \Lambda\cap D_{4H_1}(z) >N\ .
\]
Considering only the Blaschke factors in $D_{4H_1}(z)$ we get, for $w\in\partial D_{3H_1}(z)$,
\[
 |B(w)|\leq e^{-3 H_1(z) (N+1)}.
\]
By the hypothesis and Harnack then,
\begin{align*}
 \rho(w,\Lambda)\leq \left[e^{H_1(w)} |B(w)|\right]^{1/N}\leq\left[e^{2H_1(z)}e^{-3(N+1) H_1(z) }\right]^{1/N}=e^{-(3+1/N) H_1(z)}.
\end{align*}
The total pseudohyperbolic length of $\partial D_{3H_1}(z)$ is approximately $e^{-3 H_1(z)}$, hence the number of points in $\Lambda$ at distance smaller than $e^{-(3+1/N) H_1(z)}$  from $\partial D_{3H_1}(z)$ is at least approximately
\[
 \frac{e^{-3 H_1(z)}}{e^{-(3+1/N) H_1(z)}}=e^{\frac{1}N H_1(z)}.
\]
Therefore, in particular, 
\[
 M\geq e^{\frac {1}N H_1(z)}\ .
\]
This contradicts \eqref{MN} as soon as $H_1$ is assumed to be bounded below by a suitable constant depending only on $N$ ($\inf_z H_1(z)\geq N^2$ will do).

By \eqref{finite-weakly} and \cite[Lemma 2.4]{HMN2} there exists $H\in\Harmd$ (actually a multiple of $H_1$) such that $\Lambda=\cup_{j=1}^N\Lambda_j$, where each $\Lambda_j$ is $H$-separated. We shall be done as soon as we see that each $\Lambda_j$ is Nevanlinna interpolating. Let's still call $H_1$ the positive harmonic function such that the disks $D_{H_1}(\lambda)$, $\lambda\in\Lambda_j$, are pairwise disjoint. In order to check condition \eqref{intN} for $\Lambda_j$ let $B_j$ denote the Blaschke product associated to $\Lambda_j$ and fix $\lambda\in\Lambda_j$. Then 
\[
 \#[\Lambda\cap (D_{4H_1}(\lambda)\setminus D_{8H_1}(\lambda))]< N,
\]
and there exists $\beta\in(4,8)$ such that
\[
 \inf_{w\in\partial D_{\beta H_1}(\lambda)} \rho(w,\Lambda)\geq e^{-10H_1(\lambda)}\ .
\]
From here we finish as in the previous section. By hypothesis and by Harnack's inequality, for $w\in\partial D_{\beta H_1}(\lambda)$,
\begin{align*}
 |B_j(w)|\geq |B(w)|\geq e^{-H_1(w)}\rho^N(w,\Lambda)\geq e^{-2H_1(\lambda)}e^{-10NH_1(\lambda)}= e^{-(10N+2)H_1(\lambda)}\ .
\end{align*}
The function $g:=(B_j)_\lambda=B_j/b_\lambda$
is holomorphic and non-vanishing on $D_{4H_1}(\lambda)$, hence also on $D_{\beta H_1}(\lambda)$. By the minimum principle
\[
 |g(\lambda)|=(1-|\lambda|^2)|B_j^\prime(\lambda)|\geq\min_{w\in\partial D_{\beta H_1}(\lambda)}\frac{|B_j(w)|}{|b_\lambda(w)|}\geq e^{-(10N+2)H_1(\lambda)},
\]
which is condition \eqref{intN} for $\Lambda_j$, with $H=(10N+2)H_1\in\Harmd$.

\bigskip

(a)$\Rightarrow$(c) Let $\Lambda=\cup_{j=1}^N \Lambda_j$, with $\Lambda_j$ Nevanlinna interpolating for all $j=1,\dots, N$. By \eqref{intN2} there exists $H\in\Harmd$ such that
\begin{equation}\label{intNj}
 |B_j(z)|\geq e^{-H(z)} \rho(z,\Lambda_j), \quad z\in\D.
\end{equation}
Here $B_j$ denotes the Blaschke product associated to $\Lambda_j$.
In particular, there exists $H\in\Harmd$ such that $\# \Lambda_j\cap D_H(z)\leq 1$ for all $j\leq N$ and all $z\in\D$ (and therefore $\# \Lambda\cap D_H(z)\leq N$).

Fix $z\in\D$ and denote by $\lambda_1,\dots,\lambda_N$  the $N$  points of $\Lambda$ nearest to $z$, arranged by increasing distance. Let $\Lambda\cap D_H(z)=\{\lambda_1,\dots,\lambda_K\}$. Notice that $K\leq N$ and that each $\lambda_j$ belongs to a different $\Lambda_j$. Notice also that in case $K<N$ the $(K+1)$-th nearest point to $z$ is at distance no less than $e^{-H(z)}$.  Therefore, \eqref{intNj} yields
\begin{align*}
 |B(z)|&= |B_1(z)| \cdots |B_N(z)|\geq e^{-2N H(z)} \rho(z,\lambda_1)\cdots\rho(z,\lambda_K).
\end{align*}

Since
\[
 |B(N)(z)|=\frac{|B(z)|}{\rho(z,\lambda_1)\cdots \rho(z,\lambda_N) }
\]
we are done.

\bigskip

(c)$\Rightarrow$(b). Immediately, using the same notation as before,
\begin{align*}
 |B(z)|=|B(N)(z)|\rho(z,\lambda_1)\cdots \rho(z,\lambda_N)\geq e^{-H(z)} \rho^N(z,\Lambda)\ .
\end{align*}

\bigskip

[(a),(b)]$\Rightarrow$(d). Assume that $\Lambda=\cup_{j=1}^N \Lambda_j$, with $\Lambda_j$ Nevanlinna interpolating for all $j=1,\dots, N$, and that $H\in\Harmd$ is so that $\sup_{z\in\D}\# \Lambda_j\cap D_H(z)\leq 1$ for all $j=1,\dots, N$. Let $z\in\D$ and separate in two cases:

If $\rho(z,\Lambda)\geq e^{-H(z)}$ then, by (b) 
\[
 D_N(B)(z)\geq |B(z)|\geq e^{-H_1(z)} e^{-NH(z)},
\]
and we can take $H_3=H_1+NH$.

If $\rho(z,\Lambda)< e^{-H(z)}$ let $\Lambda\cap D_H(z)=\{\lambda_1,\dots,\lambda_K\}$. Notice that each $\lambda_j$ belongs to a different $\Lambda_j$ and that $K\leq N$. Consider the pseudohyperbolic divided differences defined inductively as 
\begin{align*}
 \Delta^0 B(w)&= B(w), \\
 \Delta^1 B(\lambda_1,w)&=\frac{\Delta^0 B(w)-\Delta^0 B(\lambda)}{b_{\lambda_1}(w)}=\frac{B(w)}{b_{\lambda_1}(w)}, \\
 \cdots&\cdots \\
 \Delta^K B(\lambda_1,\dots,\lambda_K,w)&=\frac{\Delta^{K-1}B (\lambda_2,\dots,\lambda_K,w)-\Delta^{K-1}B (\lambda_1,\dots,\lambda_K)}{b_{\lambda_1}(w)}\\
  &=\frac{B(w)}{b_{\lambda_1}(w)\cdots b_{\lambda_N}(w)}.
\end{align*}

Notice that $\Delta^K B(\lambda_1,\dots,\lambda_K,w)$ is holomorphic and non-vanishing in $D_H(z)$. Repeating the argument given in the implication  (b)$\Rightarrow$(a), there exists $\beta\in (2,10)$ such that $\rho(w,\Lambda)\geq e^{-10 H(z)}$, for  $w\in \partial D_{\beta H}(z)$. Then, for such $w$,
\begin{align*}
|\Delta^K B(\lambda_1,\dots,\lambda_K,w)|&=|B(N)(w)| \rho (w,\lambda_{K+1})\cdots  \rho (w,\lambda_{N})\\
&\geq  e^{-H_2(w)} e^{-2NH(z)}\geq e^{-2 H_2(z) - 2NH(z)}\ .
\end{align*}
Letting $H_3=2H_2+2 N H$ we get, by the minimum principle and Harnack,
\[
 |\Delta^K B(\lambda_1,\dots,\lambda_K,w)|\geq e^{-2H_3(z)}, \quad w\in D_{\beta H}(z).
\]
Since all these divided differences of order $K$ are uniformly bounded below (regardless of the proximity of $\lambda_1,\dots,\lambda_K$ to $z$) we get the the same bound for the pseudohyperbolic $K$-th derivative, hence
\[
 D_N(B)(z)\geq (1-|z|^2)^K |B^{(K)}(z)|=|\Delta^K B(z,\dots,z,z)|\geq e^{-2H_3(z)}.
\]

\bigskip

(d)$\Rightarrow$(c).  We first observe the following fact. 

{\bf Claim.} Under the hypothesis (d), there exists a constant $C_1>0$ such that for any 
$z\in\D$, $\# \Lambda \cap D_{H_3+C_1}(z) \le N$.

{\it Proof of the Claim.} For any $z\in\D$, let $\lambda_1,\dots,\lambda_N, \lambda_{N+1}$ be the $N+1$ points of $\Lambda$ nearest to $z$, arranged by increasing pseudohyperbolic distance. Let 
$$
B_{N+1}(\zeta) := \prod_{m=1}^{N+1} b_{\lambda_m}(\zeta).
$$
Then there are combinatorial constants $c_i$ such that 
\[
B_{N+1}^{(l)}(\zeta) = \sum_{i\in \Z_+^{N+1} : i_1+\cdots+i_{N+1} = l} 
c_i \prod_{m=1}^{N+1} b_{\lambda_m}^{(i_m)}(\zeta).
\]
By the Cauchy estimates, $(1-|\zeta|)^k |b_\lambda^{(k)}(\zeta)|\le C_k$ for any $k\geq 1$, so that
\[
(1-|\zeta|)^l \bigl| B_{N+1}^{(l)}(\zeta) \bigr| \le C
\sum_{
\begin{subarray}
 A A \subset \{1,\dots,N+1\}\\
 \# A= N+1-l
\end{subarray}
}
\left| \prod_{m\in A} b_{\lambda_m}(\zeta)\right|,
\]
thus for any $l\le N$,
\begin{equation}
\label{locest}
(1-|z|)^l \bigl| B_{N+1}^{(l)}(z) \bigr| \le C \max_{1\le m \le N+1} \rho (z, \lambda_m).
\end{equation}
On the other hand, $B= B_{N+1} B(N+1)$, so that for $0\le j \le N$,
$$
(1-|z|)^j B^{(j)} (z) 
= \sum_{l=0}^j \begin{pmatrix} j\\l \end{pmatrix} (1-|z|)^l B_{N+1}^{(l)}(z)
(1-|z|)^{j-l} B(N+1)^{(j-l)}(z).
$$
Since $(1-|z|)^{j-l} B(N+1)^{(j-l)}(z)$ are all bounded uniformly, 
\[
(1-|z|)^j \left|B^{(j)} (z) \right| \le C \max_{0\le l \le j} (1-|z|)^l \bigl| B_{N+1}^{(l)}(z)\bigr| \le C   \max_{1\le m \le N+1} \rho (z, \lambda_m),
\]
by \eqref{locest}. If the Claim fails, there are at least $N+1$ points of $\Lambda$ in $D_{H_3 + C_1} (z)$. 
Summing the above inequalities for $0\le l \le N$, we deduce 
\[
D_N(B)(z)  \le C e^{-H_3(z)-C_1 } < e^{-H_3(z) }
\]
if $C_1$ is chosen sufficiently large, which contradicts the hypothesis and proves the Claim.
\qed

Let $H_4 := 8(N+1) H_3 +C_0$, where $C_0$ is to be chosen. Again to get a contradiction, we suppose that (c) does not hold, namely
$ |B(N)(z)| \le \exp(-H_4(z))$, for a discrete sequence of $z$
going to the unit circle. By the above Claim, $-\log |B(N)|$ is a positive harmonic function on the disc $D_{H_3+C_1}(z)$. 
By Harnack's inequality, for $\zeta \in D_{H_3+C_2}(z)$, with $C_2=C_1+\log 3$,
\[
\log\frac1{|B(N)(\zeta)|} \ge \frac12 \log\frac1{|B(N)(z)|} \ge \frac12 H_4(z) \ge \frac14 H_4(\zeta),
\]
so $|B(N)(\zeta)| \le \exp(-H_4(\zeta))$ on that disc.  Applying Cauchy's estimates at a point
of $D_{H_3+C_3}(z)$ for $C_3$ large enough, we get, for $0\le k \le N$,
\[
|B(N)^{(k)}(\zeta)| \le C_N \exp\left( 2 k H_3(\zeta) - \frac14 H_4 (\zeta) \right) 
\le \exp\left( -2  H_3(\zeta) - \frac14 C_0 \right) .
\]
Now we can estimate the derivatives of $B$ by writing $B= B_{N} B(N)$ and compute as above,
and for $C_0$ large enough we get $D_N(B)(z)  \le \exp\left( -2  H_3(\zeta) \right)$, 
a contradiction.

\begin{remark*}
Finite unions of Nevanlinna interpolating sequences can also be characterized through divided differences of values on $\Lambda$. 
For any $N\geq 1 $, denote 
\[
\Lambda^N=\{(\lambda_1,\ldots,\lambda_N)\in
\Lambda\times\stackrel{\stackrel{N}{\smile}}{\cdots}\times \Lambda\; :\; 
\lambda_j\not=\lambda_k\ \textrm{if}\  j\not=k\},
\] 
and consider the set $X^{N-1}(\Lambda)$ consisting of the
functions $\omega$ defined in $\Lambda$ with divided differences of order $N-1$ uniformly
controlled by a positive harmonic function $H$ i.e., such that for some $H\in\Harmd$,
\[
\sup_{(\lambda_1,\ldots,\lambda_N)\in \Lambda^N} \vert
\Delta^{N-1}\omega(\lambda_1,\ldots,\lambda_N)\vert
e^{-[H(\lambda_1)+\cdots+H(\lambda_N)]}<+\infty\ .
\]

According to the Main Theorem in \cite{HMN2}, $\Lambda$ is the union of $N$ Nevanlinna interpolating sequences if and only if the trace $\N|\Lambda$ coincides with $ X^{N-1}(\Lambda)$.
Again, in accordance to our principle, this is the analogue for $\N$ of Vasyunin's result for $H^\infty$ (see \cite{Vas83}, \cite{Vas84}). See \cite{BNO} for the analogue result in Hardy spaces. 
\end{remark*}

\begin{proof}[Proof of the Corollary]
We shall see first that the zeros of $B-g$ are just a small perturbation of $\Lambda$. By the stability of Nevanlinna interpolating sequences, we shall then deduce that these zeros can also be split into $N$ Nevanlinna interpolating sequences.

Assume that $\Lambda=\cup_{j=1}^N \Lambda_j$, where each $\Lambda_j$ is Nevanlinna interpolating. By Theorem C, there exists $H_1\in\Harmd$ such that
\[
 \# \Lambda_j\cap D_{H_1}(z)\leq 1, \quad z\in\D, \quad j=1,\dots, N, 
\]
and therefore 
\[
 \# \Lambda \cap D_{H_1}(z)\leq N, \quad z\in\D . 
\]
There is no restriction in assuming that Theorem C (b) holds for this same $H_1$.

Then, for any $\lambda\in\Lambda$ there exists $k\in\{1,\dots, N+1\}$ such that
\[
 \rho(\zeta,\Lambda)\geq e^{-(N+1) H_1(\zeta)}, \quad\textrm{for $\zeta\in\partial D_{kH_1}(\lambda)$}.
\]
Then, by  the hypothesis and (b) in Theorem C, for such $\zeta$,
\begin{align*}
 & |B(\zeta)|\geq e^{-H_1(\zeta)}\rho^N (\zeta,\Lambda)\geq e^{-H_1(\zeta)} e^{-(N+1)N H_1(\zeta)}, \\
 & |(B-g)(\zeta)- B(\zeta)|=|g(\zeta)|\leq e^{-H_0(\zeta)}.
\end{align*}
Choosing $H_0>(N^2 + N + 1) H_1$ we can apply Rouch\'e's Theorem to deduce that
\[
 \# Z(B-g)\cap D_{kH}(\lambda)=\# \Lambda \cap D_{kH}(\lambda).
\]
This shows that for each $\lambda\in\Lambda$ there is a zero of $B_1$ at a distance smaller than $e^{-H_1 (\lambda)}$.  To see that $B-g$ has no other zeros consider any contour $\Gamma$ in $\D$ for which
\[
 \rho(\zeta,\Lambda)\geq e^{-H_1(\zeta)}, \quad \zeta\in \Gamma.
\]
The estimate above and Rouch\'e's Theorem show now that $B$ and $B-g$ have the same number of zeros in the interior of $\Gamma$. Since  $B-g$ has no other zeros we can split $Z(B-g)$ into $N$ sequences which, by the stability under such perturbations of Nevanlinna interpolating sequences \cite[Corollary 2.3]{HMN1}, are also Nevanlinna interpolating.

We have then the factorization $B-g=B_1 G$,  where $B_1$ is a finite product of NIBP's and $G$ is invertible in $\N$. Then,  for $\zeta\in\partial\D$,
\[
 \frac 1{|G(\zeta)|}=\frac{|B_1(\zeta)|}{|B(\zeta)-g(\zeta)|}\leq \frac 1{1-e^{-H_0(\zeta)}},
\]
and therefore $1/G\in H^\infty$.
\end{proof}


\begin{thebibliography}{BRSHZE}

\bibitem{Bo1} Borichev, A. \emph{Generalized Carleson--Newman inner functions}. Math. Z. 275 (2013), 1197--1206 .

\bibitem{BNT}
Borichev, A. , Nicolau, A., Thomas, P.J. \emph{Weak embedding property, inner functions and entropy}. Math. Ann. 368 (2017), no. 3--4, 987--1015.

\bibitem{BNO}
Bruna, J., Nicolau, A.,  \O{}yma, K.  \emph{A note on interpolation in the Hardy spaces of the unit disc}. Proc. Amer. Math. Soc.. 124 (1996), no. 4, 1197?1204. 

\bibitem{Ca}
Carleson, L.
\emph{Interpolations by bounded analytic functions and the corona problem}. Ann. of Math. (2) 76 (1962) 547--559.
 
\bibitem{Ca2}
Carleson, A. 
\emph{An interpolation problem for bounded analytic functions}. Amer. J. Math. 80 (1958) 921--930.

\bibitem{GM}
Gorkin, P., Mortini, R.
\emph{Two new characterizations of Carleson-Newman Blaschke products}. 
Israel J. Math. 177 (2010), 267--284. 

\bibitem{GMN}
Gorkin, P., Mortini, R., Nikolski, N. \emph{Norm controlled inversions and a corona
theorem for $H^\infty$ -quotient algebras}. J. Funct. Anal. 255 (2008), 854--876.
 
\bibitem{HMN1}
Hartmann, A., Massaneda, X., Nicolau, A.
\emph{Finitely generated Ideals in the Nevanlinna class}.
{Preprint; arXiv:1605.08160}.

\bibitem{HMN2}
Hartmann, A., Massaneda, X., Nicolau, A.
\emph{Traces of the Nevanlinna class on discrete sequences}.
\emph{Complex Anal. and Oper. Theory}. doi: 10.1007/s11785-017-0704-2.

\bibitem{HMNT}
Hartmann, A., Massaneda, X., Nicolau, A., Thomas, P.
\emph{Interpolation in the Nevanlinna and Smirnov classes and harmonic majorants}. 
J. Funct. Anal. 217 (2004), no. 1, 1--37.

\bibitem{K-L}
Kerr-Lawson, A. \emph{Some lemmas on interpolating Blaschke products and a correction}. Canad. J. Math. 21 (1969), 631--534.

 \bibitem{M} 
 Mortini, R.
 \emph{Zur Idealstruktur von Unterringen der Nevanlinna-Klasse $N$ }.
  Travaux math\'ematiques, I, 81--91, S\'em. Math. Luxembourg, Centre Univ. Luxembourg, Luxembourg, 1989. 

 \bibitem{Ma} 
 Martin, R. 
\emph{On the ideal structure of the Nevanlinna class}. 
Proc. Amer. Math. Soc. 114 (1992), no. 1, 135--143.
  

\bibitem{NV}
 Nikolski, N., Vasyunin, V. \emph{Invertibility threshold for the $H^\infty$-trace  algebra  and
the efficient inversion of matrices}. Algebra i Analiz 23
(1)(2011) 87--110.

\bibitem{RS}
Roberts, J. V., Stoll, M. \textit{Prime and principal ideals in the algebra $N^+$}. 
Arch. Math. (Basel) 27 (1976), 387--393.

\bibitem{Vas78} Vasyunin, V. I. \textit{Unconditionally convergent spectral decompositions and interpolation problems (Russian)}. Spectral theory of functions and operators. Trudy Mat. Inst. Steklov. 130 (1978), 5--49, 223.

\bibitem{Vas83} Vasyunin, V. I. \textit{Traces of bounded analytic functions on
finite unions of Carleson sets (Russian)}. Investigations on linear operators
and the theory of functions, XII.  Zap. Nauchn. Sem. Leningrad. Otdel. Mat.
Inst. Steklov. (LOMI)  \textbf{126}  (1983), 31--34. 

\bibitem{Vas84}  Vasyunin, V. I. \textit{Characterization of finite unions of
Carleson sets in terms of solvability of interpolation problems (Russian)}.
Investigations on linear operators and the theory of functions, XIII.  Zap.
Nauchn. Sem. Leningrad. Otdel. Mat. Inst. Steklov. (LOMI)  \textbf{135}  (1984),
31--35. 

\end{thebibliography}
\end{document}